\theoremstyle{plain}
\newtheorem{thm}{Theorem}[section]
\newtheorem{lemma}{\textbf{Lemma}}[section]
\newtheorem{prop}[thm]{Proposition}
\theoremstyle{definition}
\newtheorem{defn}{Definition}[section]
\newtheorem{rem}{Remark}[section]
\newtheorem{example}{Example}[section]
\numberwithin{equation}{section}
\begin{document}
	\begin{center}
			{\bf{On Abstract Nonlinear Integro-Dynamic Equations in Time Scale}}
		\vspace{.5cm}
		
		Abdul Awal Hadi Ahmed$^1$ and Bipan Hazarika$^{2,\ast}$ 
		
		\vspace{.2cm}
		\footnotesize  $^{1,2}$Department of Mathematics, Gauhati University, Guwahati 781014, Assam, India\\
		
		\vspace{.2cm}
		Email: $^{1}$aawal8486@gmail.com;   
		$^2$bh\_rgu@yahoo.co.in; bh\_gu@gauhati.ac.in
	\end{center}
	\title{}
	\author{}
	\thanks{{\today}, $^\ast$The corresponding author}

	\begin{abstract} In this paper, we investigate the existence of the asymptotically almost automorphic solution of the following type of abstract nonlinear integro-dynamic equation
		\begin{eqnarray*}
			y^{\Delta}(s) &=&Ay(s)+\mathcal{F}\left(s,y(s),\int\limits_{t_0}^{s}{\mathcal{H}(s,\tau,y(\tau))}\Delta\tau\right),~ s\in\mathbb{T}^k,\\
			y(0)&=&y_0
		\end{eqnarray*} 
		in the Banach space of continuous function on a time scale $\mathbb{T}$. We apply the Krasnoselskii fixed point theorem to show the existence of an almost automorphic solution of the above dynamic equation.
		\vskip 0.5cm
		\noindent \textbf{Key Words:} Nonlinear Equation, Abstract Dynamic Equation, Integro-Dynamic equation, Fixed Point Theory, Time scales.\\
		AMS Subject Classification No: 26A24; 26A33; 26E70; 34B15; 34N05; 39A10.
	\end{abstract}
	\maketitle
	\section{Introduction}
	In 1988, German Mathematician Stefan Hilger introduced the concept of time scale in Mathematics through his Ph.D. thesis. After that, Hilger published two interesting articles on this topic \cite{HA,HD}. Time scale unifies the discrete and continuous calculi, to study them simultaneously rather than separately. For details on time scale calculus,  see the monographs \cite{Bohner2,Bohner1}. In recent times researchers have been quite actively working on dynamic equations to merge results from both differential and difference equations. Dynamic equations are associated with several real-world phenomena involving discrete as well as continuous variables, for example, we refer to Population Dynamics \cite{Kaufmann,Zhuang}, Optimization Theory \cite{Zhu}, Economics \cite{Atici}, production-inventory modelling \cite{Uysal}, etc. 
	
	Periodic functions has a wide range of applications in real word problem, for example in field of astronomy, physics, biology, in the use of signal processing, control system, electrical engeneering etc. But to be more precise we actually come accross phenomena that involves disturbed form of periodic functions. to generalise this kind of issue, we make use of functions which do not have a exact periodic nature but having several periodic nature. We call them almost periodic. Again we generalise them to almost automorphic(AA) functions and then asymptically almost automorphic(AAA) functions, etc.
	
	In 2011, Y. Li and C. Wang, \cite{Li and Wang} introduced the concept of almost periodic functions on time scales. Then they applied the results to a class of high-order Hopfield neural networks with variable delays.  
	On the other hand, in 2012 , Hamza and Oraby \cite{Hamza and Oraby}, studied the stability of abstract dynamic equation $x^\Delta(t)=Ax(t),$ where $A$ is the infinitesimal generator of a $C_0-$semigroup $T=\left\{T(s):s\in\mathbb{T}\subset\mathcal{L}(\mathbb{Y})\right\}.$
	
	In 2013, Lizama and Mesquita, \cite{Lizama and Mesquita} introduced the concept of almost automorphic functions on time scale and presented the first basic results concerning such functions. They also studied the existence and uniqueness of solutions of the following dynamic equation $$x^\Delta(t)=A(t)x(t)+f(t,x(t)), ~t\in \mathbb{T},$$ over Euclidian space $\mathbb{R}^n.$ Assuming the equation $x^\Delta(t)=A(t)x(t)$ admitting exponential Dichotomy and $f:\mathbb{T}\times X\to X$ satisfying global Lipschitz condition on its second variable, they proved the existence and uniqueness of the almost automorphic solution to the above dynamic equation. 
	
	In 2014, Gu\'er\'ekata et al. \cite{Guerekata 1} presented almost automorphic functions of order $n.$ They also studied the existence and uniqueness, global stability of the solution of first order dynamic equation with finite time varying delay.
	
Using their results of \cite{Guerekata 1,Hamza and Oraby,Lizama and Mesquita,Li and Wang}, in 2015, Milc\'e and Mado proved the existence and uniqueness of almost automorphic mild solution of the following dynamic equation $$u^{\Delta}(t)= A(t)u(t)+f\left(t,u(t),\int\limits_{0}^{t}{\phi(s,u(s))}\Delta s\right),~ t\in\mathbb{T}^k,$$ on Euclidian space, $\mathbb{R}^n.$ $A:\mathbb{T}\to\mathbb{R}^n$ is a matrix-valued function. $\phi,f$ both satisfy some Lipschitz conditions. 
	After that in 2015, Gu\'er\'ekata et al. studied semilinear dynamic equation$x^\Delta(t)=Ax(t)+f(t,x(t))$ in Banach space, where $A$ is the infinitesimal generator of a $C_0-$semigroup of bounded linear operators. They established an almost automorphic mild solution to the equation. 
	In 2018, Hamza and Oraby \cite{Hamza and Oraby 1} studied the stability of nonlinear dynamic equation
	\begin{eqnarray*}
		x^\Delta(t)&=&A(t)x(t)+f(t,x(t)),~ t\in[\tau,\infty)_\mathbb{T}.
	\end{eqnarray*} Several other aspects are also studied.

	The notion of asymptotically almost automorphic functions also abbreviated as AAA functions, are typically those type of functions which behaves like almost automorphic functions after a certain initial transient phase. This type of functions behaves differently at the initial phase and gradually settles down with the behaviour of an equivalent almost automorphic function. These type of functions are useful for studying long-term behaviour of a dynamical system,specially used while dealing with a non-autonomous system or systems which are influenced bt some external influences.
	In 2018, Guerekata et al. \cite{Cao} studied the existence of asymptotically almost automorphic mild solutions for nonautonomous semilinear evolution equations. 
	In 2019, Lizama and Mesquita, \cite{Lizama and Mesquita 1} introduced the concept of asymptotically almost automorphic functions of order $n$ on time scale. They established fundamental properties of such functions and investigated the unique solution of IVP associated to the semilinear equation, \begin{eqnarray*}
		x^\Delta(t)&=&A(t)x(t)+f(t,x(t)),~ t\in[t_0,\infty)_\mathbb{T}\\
		x(t_0)&=&x_0
	\end{eqnarray*}
    Following the above results, in 2016, Milce \cite{Milce} studied the existence of asymptotically almost automorphic solutions for the following integro-dynamic equation $$x^\Delta(t)=Ax(t)+\int\limits_{0}^{t}{B(t-s)x(s)\Delta s}+f(t,x(t))$$ with nonlocal initial condition $x(0)=x_0+\psi(x).$ $A$ is matrix, $B$ is matrix-valued function, $f,\psi$ are rd-continuous functions satisfying some kind Lipschitz condition.
    
	In 2022, Bohner et al. \cite{Bohner 3}, gave some qualitative results for nonlinear integro-dynamic equations via integral inequalities. They considered the equation 
	\begin{eqnarray*}
		x^{\Delta}(t)+p(t)x^{\sigma}(t)&=&\mathcal{F}\left(t,x(t),\int\limits_{t_0}^{t}{\mathcal{H}(t,s,x(x))}\Delta s\right),~ t\in\mathbb{T}^k,  a\leq t\leq b\\
		x(t_0)&=&x_0\in \mathbb{R}^n
	\end{eqnarray*}  and existence, stability, boundedness, and dependence of the solution on initial data are discussed. 

	In 2023, Cosme et al. \cite{Cosme}, discussed the existence and stability of the bounded solution of the following abstract dynamic equation 
	\begin{eqnarray*}
		z^{\Delta}(t)&=&Az(t)+f(t,z(t)),~ t\in[t_0,\infty)_\mathbb{T},\\
		z(t_0)&=&z_0
	\end{eqnarray*}
	both mild and classical solutions are discussed.\\
	Motivated by the above we study the following abstract integro-dynamic equation \begin{eqnarray*}
		y^{\Delta}(s) &=&Ay(s)+\mathcal{F}\left(s,y(s),\int\limits_{t_0}^{s}{\mathcal{H}(s,\tau,y(\tau))}\Delta\tau\right),~ s\in\mathbb{T}\\
		y(0)&=&y_0
	\end{eqnarray*} 
	where $A$ is the infinitesimal generator of a $C_0-$semigroup of bounded linear operators, $T=\{T(s):s\in\mathbb{T}\}\subset\mathcal{L}(\mathbb{Y}).$\\
 Here, 
$\mathbb{T}$ is a time scale and 
$s_0 , S \in\mathbb{T}.$
\begin{eqnarray*}
\mathbb{T}^k=\left\{
\begin{array}{ll}
\mathbb{T}\setminus\left(\rho\left(\sup\mathscr {T}\right),\sup\mathbb{T}\right)~ \mbox{if } \sup\mathbb{T}<\infty\\
\mathbb{T}, ~\mbox{otherwise},
\end{array} \right.
\end{eqnarray*}
where $\mathbb{T}=\left[s_0 , S\right]_{\mathbb{T}}=\left[s_0 , S\right] \cup \mathbb{T}=\left\{s\in \mathbb{T} :s_0 \leq s\leq S \right\}.$

Throughout the article, $\left(Y, \|\cdot\|_{Y} \right)$ is a Banach space, 
    $\mathscr{M}_{Y}$ denotes the collection of all nonempty and bounded subsets of $Y$
	\section{Preliminaries}
	Let us mention some existing results related to time scale calculus, some fixed point theorems, $C_0$-semigroup and their properties, almost automorphic functions and asymptotically almost automorphic functions and their properties collectively, exponential stability of functions in time scale etc.
	\begin{defn}
		\cite[Definition 1.58]{Bohner2} A function  $f:\mathbb{T}\longrightarrow \mathbb{Y}$  is called a rd-continuous, if it is continuous at every right dense point of $\mathbb{T}$, and also ensures the existence of its limits at all left dense points in $\mathbb{T}$. We denote by $C_{rd}\left(\mathbb{T}, \mathbb{Y} \right),$ the collection of all rd-continuous functions $f:\mathbb{T} \longrightarrow \mathbb{Y}.$ 
		
		We also denote by $BC_{rd}(\mathbb{T}, \mathbb{Y}),$ the collection of all rd-continuous and bounded functions $f:\mathbb{T} \longrightarrow \mathbb{Y}.$
	\end{defn}
	\begin{defn}
		A function $f:\mathbb{T}\times \mathbb{Y}\times \mathbb{Y}$ is said to be an \textbf{rd-continuous} function on $\mathbb{T}\times \mathbb{Y}\times \mathbb{Y}$ if $f(s,\cdot,\cdot)$ is continuous on $\mathbb{Y}\times \mathbb{Y}$ for every $s\in \mathbb {S}$ and $f(\cdot,x,y)$ is rd-continuous on $\mathbb{T}$ for every $(x,y)\in \mathbb{Y}\times \mathbb{Y}.$ 
		Moreover, if the continuity of $f(s,\cdot,\cdot)$ is uniform for every $s\in \mathbb{T},$ then the function $f$ is called uniformly rd-continuous.
	\end{defn}
	\begin{defn}\cite[Definition 2.25]{Bohner2}
		A function $p:\mathbb{T}\longrightarrow\mathbb{R}$ is called a \textbf{regressive} function if $\forall~ s\in \mathbb{T}^k,$  the quantity $1+\mu(s)p(s)$ is always a nonzero quantity, where $\mu:\mathbb{T}\longrightarrow\mathbb{R}$ is the graininess function on $\mathbb{T}$, defined as $\mu(s)=\sigma(s)-s.$ 
		We denote by $\mathcal{R}\left(\mathbb{T},\mathbb{R}\right),$ the collection of all regressive functions $p:\mathbb{T}\longrightarrow\mathbb{R}.$
	\end{defn}
	\begin{defn}
		If $p\in\mathcal{R},$ then the generalised exponential function is defined as $$e_p(s,t)=\exp\left(\int\limits_{t}^{s}{\xi_{\mu(\tau)}(p(\tau))}\Delta \tau\right),~\text{for}~t,s\in \mathbb{T}$$ where the cylinder transformation $\xi_h:\mathbb{C}_h\to\mathbb{Z}_h,$ given by $$\xi_h(z)=\frac{1}{h}\log(1+zh),$$
	\end{defn}
where $\log$ is the principal logarithm function. For $h=0,~\xi_o$ is supposed to be the identical transformation.\\
	For more properties of the generalised exponential function relating to regressive function, refer to \cite{Bohner2, Lizama and Mesquita} etc.
	\begin{defn}\label{defn-2.6}
		 A mapping between two normed linear spaces is considered compact if bounded sets are mapped into relatively compact sets.
	\end{defn}
	\begin{thm}\cite[Lemma 4]{Zhu and Wang} (Arzel\`a-Ascoli Theorem) \label{thm-2.3}
		A subset of $C(\mathbb{T},\mathbb{R})$ which is both equicontinuous and bounded is relatively compact. 
	\end{thm}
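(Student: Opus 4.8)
The plan is to treat $\mathbb{T}=[s_0,S]_{\mathbb{T}}$ as a compact metric space under the usual metric of $\mathbb{R}$ and to run the classical Arzel\`a--Ascoli argument, exploiting that $\left(C(\mathbb{T},\mathbb{R}),\|\cdot\|_{\infty}\right)$ is a complete metric space; in a complete metric space a set is relatively compact precisely when its closure is sequentially compact. Writing $\mathcal{A}$ for the given bounded and equicontinuous family, I would therefore show that \emph{every sequence $(f_n)_n\subset\mathcal{A}$ admits a uniformly convergent subsequence}, whose limit lies in $C(\mathbb{T},\mathbb{R})$ by the uniform limit theorem.

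First I would use that $\mathbb{T}\subset\mathbb{R}$ is separable, fixing a countable dense subset $D=\{t_1,t_2,\dots\}\subset\mathbb{T}$. By boundedness of $\mathcal{A}$, the scalar sequence $\left(f_n(t_1)\right)_n$ is bounded in $\mathbb{R}$, so by Bolzano--Weierstrass it has a convergent subsequence; passing to that subsequence and repeating the extraction at $t_2,t_3,\dots$ gives nested subsequences, and the standard diagonal procedure yields a single subsequence $(f_{n_j})_j$ for which $\left(f_{n_j}(t_k)\right)_j$ converges for every $k$, i.e.\ pointwise convergence holds on all of $D$.

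Next I would upgrade pointwise convergence on $D$ to uniform convergence on $\mathbb{T}$ by means of equicontinuity. Given $\varepsilon>0$, equicontinuity furnishes a $\delta>0$ with $|g(s)-g(t)|<\varepsilon$ whenever $|s-t|<\delta$, uniformly over $g\in\mathcal{A}$; compactness of $\mathbb{T}$ then lets me cover it by finitely many $\delta$-balls centred at points of $D$. Combining the triangle inequality with the convergence of $(f_{n_j})_j$ at those finitely many centres shows that $(f_{n_j})_j$ is uniformly Cauchy, hence converges uniformly to some $f$. Completeness of $C(\mathbb{T},\mathbb{R})$ guarantees $f\in C(\mathbb{T},\mathbb{R})$, so $\overline{\mathcal{A}}$ is sequentially compact, which is exactly the relative compactness of $\mathcal{A}$.

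The main obstacle I anticipate lies entirely in the equicontinuity step on a time scale: one must confirm that the operative notion of equicontinuity is the metric one inherited from $\mathbb{R}$, so that the finite $\delta$-cover argument applies verbatim even across right-scattered points, where the graininess $\mu$ forces jumps, and that the covering balls may indeed be centred at points of the dense set $D$. Everything else is the textbook diagonal-extraction and uniform-Cauchy argument, so the real work is checking these time-scale compatibility points rather than devising any new idea.
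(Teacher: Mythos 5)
Your proof is correct, but there is nothing in the paper to compare it against in detail: the paper does not prove this theorem at all, it simply quotes it as a known result with the citation \cite{Zhu and Wang}. What you have written is the classical Arzel\`a--Ascoli argument for a compact metric space (diagonal extraction over a countable dense set, a finite $\delta$-net from equicontinuity, the uniform-Cauchy upgrade, and completeness of $C(\mathbb{T},\mathbb{R})$), and it does apply verbatim in the time-scale setting: the interval $[s_0,S]_{\mathbb{T}}$ used throughout the paper is a closed, bounded subset of $\mathbb{R}$, hence a compact metric space, and continuity and equicontinuity of functions on $\mathbb{T}$ are exactly the metric notions inherited from $\mathbb{R}$, so right-scattered points cause no difficulty --- near an isolated point a small enough ball of $\mathbb{T}$ contains only that point, and one never compares values of $f$ at points outside $\mathbb{T}$. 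Two small points deserve attention. First, you pass from the hypothesis ``equicontinuous'' to a single $\delta$ valid \emph{uniformly} in the base point; this upgrade from pointwise to uniform equicontinuity on a compact domain is true (a Heine--Cantor-type covering argument) but should be stated rather than absorbed silently. Second, the result actually cited by the paper is formulated in \cite{Zhu and Wang} for families on an \emph{unbounded} interval $[t_0,\infty)_{\mathbb{T}}$ and requires an extra uniform-Cauchy hypothesis there; the statement as reproduced in the paper is really the compact-domain specialization, which is precisely the case your self-contained proof handles, so your argument supports the statement as written better than the bare citation does.
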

	\begin{thm}
		A subset of the space of continuous functions on a compact metric space  is relatively compact if and only if it is bounded and equicontinuous.
	\end{thm}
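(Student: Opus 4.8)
The statement is the classical Arzel\`a--Ascoli theorem in its biconditional form, so the plan is to prove the two implications separately. Write $(X,d)$ for the compact metric space, $C(X)$ for the Banach space of continuous real-valued functions equipped with the uniform norm $\|f\|_\infty=\sup_{x\in X}|f(x)|$, and let $\mathcal{F}\subseteq C(X)$ be the subset under consideration. I would first dispose of the \emph{necessity} direction, which is the easier one and the part not already covered by Theorem \ref{thm-2.3}. Assuming $\mathcal{F}$ is relatively compact, its closure $\overline{\mathcal{F}}$ is compact, hence bounded and totally bounded; boundedness of $\mathcal{F}$ follows at once. For equicontinuity, given $\varepsilon>0$ I would cover $\overline{\mathcal{F}}$ by finitely many $\varepsilon/3$-balls centred at $g_1,\dots,g_m\in C(X)$; each $g_i$ is continuous on the compact space $X$ and therefore uniformly continuous, so a common modulus $\delta>0$ can be chosen by taking the minimum over the finite list. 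The three-term estimate $|f(x)-f(y)|\le|f(x)-g_i(x)|+|g_i(x)-g_i(y)|+|g_i(y)-f(y)|$ then yields $|f(x)-f(y)|<\varepsilon$ whenever $d(x,y)<\delta$, uniformly in $f\in\mathcal{F}$.

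The \emph{sufficiency} direction is, for $C(\mathbb{T},\mathbb{R})$, precisely the content already recorded in Theorem \ref{thm-2.3}; for a general compact metric space $X$ I would reprove it directly, and this part carries the real content. Since $C(X)$ is complete, relative compactness is equivalent to sequential compactness of the closure, so it suffices to show that every sequence $(f_n)\subseteq\mathcal{F}$ admits a uniformly convergent subsequence. As $X$ is a compact metric space it is separable; fix a countable dense set $D=\{x_1,x_2,\dots\}\subseteq X$. Using uniform boundedness of $\mathcal{F}$, the real sequence $(f_n(x_1))$ is bounded, so by Bolzano--Weierstrass it has a convergent subsequence; repeating at $x_2,x_3,\dots$ and passing to the Cantor diagonal subsequence produces a single subsequence, still denoted $(f_n)$, that converges at every point of $D$.

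It remains to upgrade pointwise convergence on $D$ to uniform convergence on $X$, and here equicontinuity and compactness of $X$ enter together. Given $\varepsilon>0$, choose $\delta>0$ from equicontinuity so that $d(x,y)<\delta$ forces $|f_n(x)-f_n(y)|<\varepsilon/3$ for all $n$; by compactness finitely many $\delta$-balls centred at points of $D$, say $x_{i_1},\dots,x_{i_p}$, cover $X$. For $N$ large enough the finitely many sequences $(f_n(x_{i_j}))_j$ are all within $\varepsilon/3$ of their limits once $m,n\ge N$, and the three-term estimate obtained by approximating an arbitrary $x$ by a nearby $x_{i_j}$ gives $\|f_m-f_n\|_\infty<\varepsilon$. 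Thus $(f_n)$ is uniformly Cauchy, and completeness of $C(X)$ provides a continuous uniform limit, establishing sequential compactness and hence relative compactness of $\mathcal{F}$.

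I expect the main obstacle to lie in the sufficiency direction, specifically in coordinating the diagonal extraction with the equicontinuity-plus-compactness argument so that the pointwise limits on the countable dense set genuinely control the supremum over all of $X$. The completeness of $C(X)$ together with the finiteness of the $\delta$-net coming from compactness are exactly the two ingredients that make this coordination succeed.
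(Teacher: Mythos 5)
Your proof is correct: it is the standard diagonal-plus-$\varepsilon$-net argument for the Arzel\`a--Ascoli theorem, with both implications handled properly (total boundedness of the closure plus uniform continuity of the finitely many centers for necessity; separability of $X$, Bolzano--Weierstrass with Cantor diagonalization, and the equicontinuity/finite-net upgrade to a uniform Cauchy estimate for sufficiency). There is, however, nothing in the paper to compare it with: the paper states this theorem in the Preliminaries as a background fact, with no proof and no citation, the nearest related item being Theorem \ref{thm-2.3}, which quotes the time-scale version for $C(\mathbb{T},\mathbb{R})$ from the literature; so your argument supplies a proof where the paper supplies none. One caveat is worth recording. Your sufficiency direction invokes Bolzano--Weierstrass for the scalar sequences $(f_n(x_i))$, so what you have actually proved is the theorem for real-valued (more generally, finite-dimensional-valued) continuous functions --- which is the correct reading of the statement, and the only reading under which the ``if'' direction is true. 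For functions with values in an infinite-dimensional Banach space $\mathbb{Y}$, boundedness plus equicontinuity is \emph{not} sufficient for relative compactness: one must additionally assume that each evaluation set $\{f(x):f\in\mathcal{F}\}$ is relatively compact in $\mathbb{Y}$, because closed balls in $\mathbb{Y}$ fail to be compact, and the diagonal extraction breaks down exactly at the Bolzano--Weierstrass step. This distinction is not academic here: the paper later applies the Arzel\`a--Ascoli theorem to the $\mathbb{Y}$-valued family $\mathcal{W}_1(B_k)$ in Step 1 of the proof of Theorem \ref{thm-3.1}, a use that neither your proof nor the scalar statement justifies.
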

	\begin{defn}
		An operator $T:\mathbb{X}\to \mathbb{Y}$ is said to be completely continuous if it is continuous and sends a bounded set to a relatively compact set. i.e.,  $T$ continuous as well as compact.
	\end{defn}
	\begin{thm}\cite[Theorem 11.2]{Pata}(Krasniselksi\u{\i} fixed point theorem)\label{thm-2.1} Let $\mathbb Y$ be a Banach space and $B\subset \mathbb Y$ be a nonempty, closed and convex subset of $\mathbb{Y}$. Let $F_1, F_2:B\to \mathbb Y$ be such that
		\begin{itemize}
			\item[(i)] $F_1$ is continuous and $F_1(B)$ is relatively compact ($F_1$ is completely continuous)
			\item[(ii)] $F_2$ is a contraction.
			\item[(iii)] $F_1(y_1)+F_2(y_2)\in B,~\forall ~y_1,y_2\in B.$
		\end{itemize}
		Then $\exists ~\Bar{y}$ such that $T_1(\Bar{y})+T_2(\Bar{y})=\Bar{y}.$
	\end{thm}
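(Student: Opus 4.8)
The plan is to reduce the statement to a combination of the Banach contraction principle and Schauder's fixed point theorem by decoupling the two summands. First I would fix $y \in B$ and consider the auxiliary map $\Phi_y : B \to \mathbb{Y}$ defined by $\Phi_y(x) = F_1(y) + F_2(x)$. Hypothesis (iii), applied with $y_1 = y$ and $y_2 = x$, guarantees $\Phi_y(B) \subseteq B$, so $\Phi_y$ is a self-map of $B$. Since $B$ is a closed subset of the Banach space $\mathbb{Y}$ it is complete, and because $F_2$ is a contraction with some constant $k \in [0,1)$ the map $\Phi_y$ is itself a contraction with the same constant. The Banach contraction principle then yields a unique $J(y) \in B$ with $J(y) = F_1(y) + F_2(J(y))$. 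This defines an operator $J : B \to B$, and the crucial observation is that any fixed point $\bar y$ of $J$ satisfies $\bar y = F_1(\bar y) + F_2(\bar y)$, which is exactly the conclusion we seek.

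Next I would verify that $J$ inherits enough regularity to admit a fixed point. For continuity, if $y_n \to y$ in $B$, subtracting the defining relations for $J(y_n)$ and $J(y)$ and using the contraction estimate gives $(1-k)\,\|J(y_n) - J(y)\| \le \|F_1(y_n) - F_1(y)\|$, so continuity of $F_1$ forces $J(y_n) \to J(y)$. For compactness I would write $J = (I - F_2)^{-1} \circ F_1$ on $B$. The operator $I - F_2$ is injective on $B$, and its inverse is Lipschitz with constant $(1-k)^{-1}$, since $\|u - v\| \le \|(I-F_2)u - (I-F_2)v\| + k\,\|u-v\|$ for all $u,v$. Because a Lipschitz map sends totally bounded sets to totally bounded sets, and $F_1(B)$ is relatively compact (hence totally bounded) by (i), the image $J(B) = (I-F_2)^{-1}(F_1(B))$ is totally bounded; completeness of $\mathbb{Y}$ together with closedness of $B$ then upgrades this to relative compactness.

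With these two facts in hand, $J : B \to B$ is a continuous self-map of the nonempty, closed, convex set $B$ whose range is relatively compact, so Schauder's fixed point theorem (the classical compactness counterpart of the contraction principle) provides $\bar y \in B$ with $J(\bar y) = \bar y$, and therefore $F_1(\bar y) + F_2(\bar y) = \bar y$.

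I expect the main obstacle to be the compactness step, that is, transferring relative compactness of $F_1(B)$ through the resolvent-type operator $(I - F_2)^{-1}$; this hinges on the Lipschitz bound for $(I-F_2)^{-1}$ and on the fact that it is the totally bounded structure, rather than mere boundedness, that is preserved under Lipschitz maps. A secondary point requiring care is that condition (iii) is used to make each $\Phi_y$ a self-map of $B$, which is what makes $J$ well defined and keeps all the relevant fixed points inside $B$; one should also confirm that the contraction constant of $F_2$ is uniform, so that the single constant $k$ controls all of the estimates above.
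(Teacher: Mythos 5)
Your proposal is correct, and it is essentially the classical argument for Krasnoselskii's theorem: the paper itself gives no proof of this statement (it only cites Pata, Theorem 11.2), and the proof in that cited source follows exactly your route --- use hypothesis (iii) and the Banach contraction principle to define $J=(I-F_2)^{-1}\circ F_1$ as a self-map of $B$, check that $J$ is continuous with relatively compact range via the Lipschitz bound $\|(I-F_2)^{-1}u-(I-F_2)^{-1}v\|\leq (1-k)^{-1}\|u-v\|$, and then apply Schauder's fixed point theorem. So your attempt matches the intended proof; the only cosmetic remark is that closedness of $B$ is not actually needed to pass from total boundedness of $J(B)$ to relative compactness (completeness of $\mathbb{Y}$ suffices), and that the conclusion's $T_1,T_2$ are typos in the paper for $F_1,F_2$.
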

	\begin{thm} \cite{Lizama and Mesquita,Li&Wang}\label{thm-2.2}
		{If $\alpha>0,$ then $e_{\ominus\alpha}(t,s)\leq 1,$ $t,s\in\mathbb{T},~ t>s.$} 
	\end{thm}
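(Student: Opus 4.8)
The plan is to reduce the claim to a sign computation for the integrand appearing in the exponent that defines $e_{\ominus\alpha}$. First I would recall that for the positive constant $\alpha$ the circle–minus operation yields $(\ominus\alpha)(s)=-\alpha/(1+\mu(s)\alpha)$, and that this is positively regressive: indeed $1+\mu(s)(\ominus\alpha)(s)=1/(1+\mu(s)\alpha)>0$, because $\alpha>0$ and $\mu(s)\geq 0$ give $1+\mu(s)\alpha\geq 1$. Hence $\ominus\alpha\in\mathcal{R}(\mathbb{T},\mathbb{R})$ and $e_{\ominus\alpha}(t,s)$ is well defined.

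Next, using the definition of the generalised exponential function together with the cylinder transformation $\xi_h(z)=\tfrac1h\log(1+zh)$, I would evaluate the integrand. For $h=\mu(\tau)>0$ one computes
$$\xi_{\mu(\tau)}\big((\ominus\alpha)(\tau)\big)=\frac{1}{\mu(\tau)}\log\!\left(1+\mu(\tau)\cdot\frac{-\alpha}{1+\mu(\tau)\alpha}\right)=\frac{1}{\mu(\tau)}\log\!\left(\frac{1}{1+\mu(\tau)\alpha}\right)=-\frac{1}{\mu(\tau)}\log\big(1+\mu(\tau)\alpha\big),$$
while at right–dense points, where $\mu(\tau)=0$ and $\xi_0$ is the identity, it equals $-\alpha$. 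In either case the integrand is nonpositive, since $\alpha>0$ forces $1+\mu(\tau)\alpha\geq 1$ and therefore $\log\big(1+\mu(\tau)\alpha\big)\geq 0$.

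Then I would conclude: because $t>s$, the $\Delta$–integral of a nonpositive rd–continuous function over $[s,t]_{\mathbb{T}}$ is itself $\leq 0$, so by monotonicity of $\exp$
$$e_{\ominus\alpha}(t,s)=\exp\!\left(\int_{s}^{t}\xi_{\mu(\tau)}\big((\ominus\alpha)(\tau)\big)\,\Delta\tau\right)\leq \exp(0)=1,$$
which is the desired bound. An equivalent route is to invoke the semigroup identity $e_{\alpha}(t,s)\,e_{\ominus\alpha}(t,s)=e_{0}(t,s)=1$ and to observe that the same sign argument gives $e_{\alpha}(t,s)\geq 1$ for $t>s$, whence $e_{\ominus\alpha}(t,s)=1/e_{\alpha}(t,s)\leq 1$.

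There is no serious obstacle here; the whole statement is essentially the nonnegativity of $\log(1+\mu\alpha)$ propagated through the monotonicity of $\exp$. The only points that require a little care are the well–definedness of $\ominus\alpha$ (settled by the positive–regressivity check above) and the correct evaluation of the cylinder transformation at right–dense points via the convention $\xi_0=\mathrm{id}$.
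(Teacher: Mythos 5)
Your proof is correct. Note that the paper does not prove this statement at all: it is quoted as a known result with citations to Lizama--Mesquita and Li--Wang, so there is no internal argument to compare against. Your sign computation --- checking positive regressivity of $\ominus\alpha$, evaluating the cylinder transformation to get $\xi_{\mu(\tau)}\big((\ominus\alpha)(\tau)\big)=-\tfrac{1}{\mu(\tau)}\log\big(1+\mu(\tau)\alpha\big)\leq 0$ (or $-\alpha$ at right-dense points), and then using monotonicity of the $\Delta$-integral and of $\exp$ --- is precisely the standard proof of this lemma found in the cited literature, and your alternative route via $e_{\ominus\alpha}(t,s)=1/e_{\alpha}(t,s)$ with $e_{\alpha}(t,s)\geq 1$ is an equally valid shortcut. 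In effect you have supplied the proof the paper outsources, and done so without gaps.
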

	\begin{lemma}\label{lemma-2.1}
		Let $\alpha>0,$ then for any fixed $s\in\mathbb{T}$ and $s=-\infty,$ one has the following: $\lim\limits_{t\to +\infty}{e_{\ominus\alpha}(t,s)=0}.$
	\end{lemma}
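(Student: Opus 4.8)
The plan is to pass to the exponent in the definition of $e_{\ominus\alpha}$ and show that it diverges to $-\infty$. Since
\[
e_{\ominus\alpha}(t,s)=\exp\left(\int_s^t \xi_{\mu(\tau)}(\ominus\alpha)\,\Delta\tau\right),
\]
and $\exp$ is continuous, it is enough to prove that the exponent tends to $-\infty$ as $t\to+\infty$. I would first compute the integrand: using $\ominus\alpha=\dfrac{-\alpha}{1+\mu\alpha}$ one gets $1+\mu(\tau)(\ominus\alpha)=\dfrac{1}{1+\mu(\tau)\alpha}$, so that
\[
\xi_{\mu(\tau)}(\ominus\alpha)=-\frac{1}{\mu(\tau)}\log\bigl(1+\mu(\tau)\alpha\bigr)
\]
at each right-scattered $\tau$, while $\xi_0(\ominus\alpha)=-\alpha$ at each right-dense $\tau$; in particular the integrand is strictly negative. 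Writing $L_c(t)$ for the Lebesgue measure of the right-dense part of $[s,t)_{\mathbb T}$ and $\{\mu_i\}$ for the graininesses at the right-scattered points below $t$, the delta integral unfolds as
\[
-\int_s^t \xi_{\mu(\tau)}(\ominus\alpha)\,\Delta\tau=\alpha\,L_c(t)+\sum_i \log\bigl(1+\mu_i\alpha\bigr),
\]
and the claim reduces to showing this diverges to $+\infty$.

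The hard part is that the graininess need not be bounded: because $\log(1+\mu_i\alpha)$ grows only logarithmically in $\mu_i$, one long jump contributes almost nothing relative to its length $\mu_i$. Indeed, $\log(1+x)\le x$ already gives $e_{\ominus\alpha}(t,s)\ge e^{-\alpha(t-s)}$, so the decay is never faster than in the continuous case and no crude estimate suffices. To get around this I would use the sharper inequality $\log(1+x)\ge \dfrac{x}{1+x}$ for $x\ge 0$ together with a split of the jumps into small ones ($\mu_i\le 1$) and large ones ($\mu_i>1$).

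With this split, set $A(t)=L_c(t)+\sum_{\mu_i\le1}\mu_i$ and $C(t)=\sum_{\mu_i>1}\mu_i$, so that $A(t)+C(t)=t-s\to+\infty$. For a small jump, $\log(1+\mu_i\alpha)\ge\frac{\mu_i\alpha}{1+\mu_i\alpha}\ge\frac{\alpha}{1+\alpha}\mu_i$, and bundling this with the right-dense contribution yields
\[
\alpha\,L_c(t)+\sum_{\mu_i\le1}\log(1+\mu_i\alpha)\ \ge\ \frac{\alpha}{1+\alpha}\,A(t).
\]
If $A(t)\to+\infty$ this term alone drives the exponent to $-\infty$. Otherwise $A(t)$ stays bounded and hence $C(t)\to+\infty$; as each large jump has length $\mu_i>1$, the large-jump length can diverge only if there are infinitely many large jumps, and each contributes at least $\log(1+\alpha)>0$ to $\sum_i\log(1+\mu_i\alpha)$, so that sum diverges. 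Either way the exponent tends to $-\infty$ and $e_{\ominus\alpha}(t,s)\to 0$.

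For the remaining case $s=-\infty$ I would reduce to the finite one via Theorem~\ref{thm-2.2}: for $s'<s<t$ the semigroup identity $e_{\ominus\alpha}(t,s')=e_{\ominus\alpha}(t,s)\,e_{\ominus\alpha}(s,s')$ and $0\le e_{\ominus\alpha}(s,s')\le 1$ give $0\le e_{\ominus\alpha}(t,s')\le e_{\ominus\alpha}(t,s)$, so the limit is again $0$. I expect the small/large-jump dichotomy to be the only genuinely delicate point; the rest is bookkeeping with the delta integral.
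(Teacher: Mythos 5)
The paper never proves Lemma \ref{lemma-2.1}: it is stated as a bare preliminary, with no argument and no citation attached, so there is no in-paper proof to measure yours against and your proposal must stand on its own. It does stand: the computation of the exponent is correct ($\xi_{\mu(\tau)}(\ominus\alpha)=-\tfrac{1}{\mu(\tau)}\log(1+\mu(\tau)\alpha)$ at right-scattered points, $-\alpha$ at right-dense ones), the identity $L_c(t)+\sum_i\mu_i=t-s$ is legitimate because $t\in\mathbb{T}$ forces every gap opened by a point of $[s,t)_{\mathbb{T}}$ to close by $t$, and your small/large-jump dichotomy correctly isolates the one genuinely delicate issue, namely unbounded graininess: if $A(t)\to\infty$, the bound $\log(1+\mu_i\alpha)\ge\tfrac{\alpha}{1+\alpha}\mu_i$ for $\mu_i\le 1$ pushes the exponent to infinity, while if $A(t)$ stays bounded then $C(t)\to\infty$ can only happen through infinitely many jumps of size $>1$, each worth at least $\log(1+\alpha)$; since both groups of terms are nonnegative, either case suffices. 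The reduction of the $s=-\infty$ case via $e_{\ominus\alpha}(t,s')=e_{\ominus\alpha}(t,s)\,e_{\ominus\alpha}(s,s')\le e_{\ominus\alpha}(t,s)$, using Theorem \ref{thm-2.2} and positivity, is also sound. Worth knowing, though: the standard route in the time-scale literature gets this in two lines and avoids the dichotomy entirely. Since $\ominus\alpha$ is positively regressive, $e_{\ominus\alpha}(t,s)=1/e_{\alpha}(t,s)$, and the time-scale Bernoulli inequality $e_{\alpha}(t,s)\ge 1+\alpha(t-s)$ for constant $\alpha>0$ and $t\ge s$ (immediate from $e_{\alpha}(t,s)=1+\alpha\int_s^t e_{\alpha}(\tau,s)\Delta\tau$ together with $e_{\alpha}(\tau,s)\ge 1$) yields $0<e_{\ominus\alpha}(t,s)\le\bigl(1+\alpha(t-s)\bigr)^{-1}\to 0$, again with no assumption on the graininess; the $s=-\infty$ case then follows exactly as you argue. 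Your proof is correct but re-derives from scratch what this one known inequality packages for free.
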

	\begin{lemma}\cite{Wong and Others}\label{lemma-2.2}
		Let $y,f\in C_{rd}(\mathbb{T},\mathbb R^+)$ with $f$ a nondecreasing function and $g,h\in \mathcal{R}^+(\mathbb S \mathbb R)$ with $g\geq 0,h\geq 0$.  If $$y(s)\leq f(s)+\int\limits_{a}^{s}{h(s)\left[y(t)+\int\limits_{a}^{t}{g(\tau)y(\tau)}\Delta \tau\right]} \Delta s ~\text{for}~ s\in \mathbb{T}^k$$ then the following two conditions hold:
	\end{lemma}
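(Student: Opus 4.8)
The plan is to collapse the nested integral inequality into a single first‑order linear dynamic inequality and then close it with the standard time‑scale comparison (variation‑of‑constants) estimate. First I would introduce the auxiliary function
\[
V(s)=\int\limits_a^s h(t)\left[y(t)+\int\limits_a^t g(\tau)\,y(\tau)\,\Delta\tau\right]\Delta t,
\]
so that the hypothesis reads $y(s)\le f(s)+V(s)$, while $V(a)=0$ and, since $h\ge 0$ and the bracketed term is nonnegative, $V$ is nondecreasing. Differentiating in the delta sense gives
\[
V^{\Delta}(s)=h(s)\left[y(s)+\int\limits_a^s g(\tau)\,y(\tau)\,\Delta\tau\right],
\]
and inserting the bound $y\le f+V$ both for $y(s)$ and inside the inner integral yields
\[
V^{\Delta}(s)\le h(s)\left[f(s)+V(s)+\int\limits_a^s g(\tau)\bigl(f(\tau)+V(\tau)\bigr)\Delta\tau\right].
\]

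Next I would exploit monotonicity to remove the remaining integral. Because $f$ and $V$ are nondecreasing, $\int_a^s g(\tau)f(\tau)\Delta\tau\le f(s)\int_a^s g(\tau)\Delta\tau$ and likewise for $V$, so that, on setting $H(s):=h(s)\bigl(1+\int_a^s g(\tau)\Delta\tau\bigr)$, the estimate becomes the linear form
\[
V^{\Delta}(s)\le H(s)\,V(s)+H(s)\,f(s).
\]
A short check confirms that $H$ is rd‑continuous and, being nonnegative, positively regressive ($1+\mu(s)H(s)\ge 1>0$), so that the generalized exponential $e_H(\cdot,\cdot)$ is well defined and positive and the subsequent estimate is legitimate.

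Then I would apply the comparison principle for $V^{\Delta}\le H\,V+r$ with $V(a)=0$, which gives
\[
V(s)\le \int\limits_a^s e_H\bigl(s,\sigma(t)\bigr)\,H(t)\,f(t)\,\Delta t,
\]
and combining with $y\le f+V$ produces the first asserted bound
\[
y(s)\le f(s)+\int\limits_a^s e_H\bigl(s,\sigma(t)\bigr)\,H(t)\,f(t)\,\Delta t .
\]
For the sharper second bound I would use that $f$ is nondecreasing to pull $f(s)$ out of the integral, together with the identity $\int_a^s e_H(s,\sigma(t))H(t)\,\Delta t=e_H(s,a)-1$, obtaining $y(s)\le f(s)\,e_H(s,a)$.

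I expect the main obstacle to be the second step: rigorously justifying that $y(\tau)$ may be replaced by $f(\tau)+V(\tau)$ inside the inner $\Delta$‑integral and that the monotone factors $f(s)$ and $V(s)$ can then be pulled outside. This hinges on establishing genuinely, rather than formally, that $V$ is nondecreasing and that $f+V$ dominates $y$ uniformly for all $\tau\le s$; equal care is needed to verify positive regressivity of the composite coefficient $H=h\,(1+\int_a^{\cdot} g)$, since the whole variation‑of‑constants estimate and the terminal identity $e_H(s,a)-1=\int_a^s e_H(s,\sigma(t))H(t)\,\Delta t$ rely on $H\in\mathcal{R}^+$.
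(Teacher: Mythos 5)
The paper itself offers no proof of this lemma --- it is quoted verbatim from the cited Gronwall-inequality paper of Wong, Yeh and Hong --- so your attempt has to be measured against the stated conclusions (a) and (b), and there it falls short. Your argument is internally correct up through the comparison step, but it terminates at the wrong bound. The damage is done at the monotonicity step: replacing $\int_a^s g(\tau)\bigl(f(\tau)+V(\tau)\bigr)\Delta\tau$ by $\bigl(f(s)+V(s)\bigr)\int_a^s g(\tau)\Delta\tau$ turns the problem into a linear dynamic inequality whose coefficient is $H(s)=h(s)\bigl(1+\int_a^s g(\tau)\Delta\tau\bigr)$, and your final estimate is $y(s)\le f(s)\,e_H(s,a)$. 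The lemma asserts $y(s)\le f(s)\,e_{h+g}(s,a)$, and $e_H$ is not dominated by $e_{h+g}$: already on $\mathbb{T}=\mathbb{R}$ with $h,g$ positive constants one has $e_H(s,a)=\exp\bigl(h(s-a)+\tfrac12 hg(s-a)^2\bigr)$, which grows like $e^{cs^2}$, while $e_{h+g}(s,a)=e^{(h+g)(s-a)}$; so for large $s$ your bound is strictly weaker and neither (a) nor (b) follows from it. (Near $s=a$ the comparison goes the other way when $g>0$, so the two exponentials are genuinely incomparable --- you have proved a different inequality, not the stated one; only the trivial consequence ``$f\equiv 0\Rightarrow y\equiv 0$'' survives.)

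The missing idea is a second substitution that preserves the coefficient $h+g$ instead of creating the product $h\bigl(1+\int_a^{\cdot}g\bigr)$. Keep your $V$, so that $V^\Delta(s)\le h(s)\bigl[f(s)+V(s)+\int_a^s g(\tau)(f(\tau)+V(\tau))\Delta\tau\bigr]$, and set $u(s):=V(s)+\int_a^s g(\tau)\bigl(f(\tau)+V(\tau)\bigr)\Delta\tau$, so that $u(a)=0$, $V\le u$, and $V^\Delta(s)\le h(s)\bigl(f(s)+u(s)\bigr)$. Then
\begin{equation*}
u^\Delta(s)=V^\Delta(s)+g(s)\bigl(f(s)+V(s)\bigr)\le h(s)\bigl(f(s)+u(s)\bigr)+g(s)\bigl(f(s)+u(s)\bigr)=(h+g)(s)\bigl(f(s)+u(s)\bigr),
\end{equation*}
a linear inequality whose coefficient is exactly $h+g\in\mathcal{R}^+$. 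The comparison theorem you invoked, together with the monotonicity of $f$, gives $u(s)\le f(s)\bigl(e_{h+g}(s,a)-1\bigr)$; feeding this back into $V^\Delta(t)\le h(t)\bigl(f(t)+u(t)\bigr)\le h(t)f(t)e_{h+g}(t,a)$ and integrating yields $y(s)\le f(s)+V(s)\le f(s)\bigl[1+\int_a^s h(t)e_{h+g}(t,a)\Delta t\bigr]$, which is conclusion (a); and since $0\le h\le h+g$, the integral is at most $\int_a^s (h+g)(t)e_{h+g}(t,a)\Delta t=e_{h+g}(s,a)-1$, which gives (b). Your auxiliary facts --- the identity $\int_a^s e_p(s,\sigma(t))p(t)\Delta t=e_p(s,a)-1$, positive regressivity of nonnegative rd-continuous coefficients, and the nondecreasing character of $V$ --- are all fine; the single defect, fatal to the result as stated, is pulling the monotone factor out of the inner integral too early.
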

	\begin{itemize}
		\item[a.] $y(s)\leq f(s){\left[1+\int\limits_{a}^{s}{h(s)e_{h+g}(s,a)}\Delta s\right]}  ~\text{for}~ s\in \mathbb{T}^k$\\
		\item[b.] $y(s)\leq f(s)e_{h+g}(s,a)$ for $s\in \mathbb{T}^k.$
	\end{itemize}
	In particular if $f(t)=0,$ then $y(t)=0$ for $s\in \mathbb{T}^k$\\
        Now we recall some results concerning semigroups of linear operators on time scales. 
	\begin{defn}\cite{Hamza and Oraby}
		A time scale $\mathbb{T}$ satisfying $a-b\in \mathbb{T},$ for any $a,b\in \mathbb{T}$ with $a>b$ is called a semigroup time scale, usually denoted by $\mathbb{T}\subseteq\mathbb{R}^{\geq0}.$
	\end{defn}
	\begin{defn}\cite{Hamza and Oraby}
		Let $\mathbb{Y}$ be a Banach space and $\mathbb{T}$ is a time scale containing $0.$ We say that $T:\mathbb{T}_0^+\to\mathcal{L}(\mathbb{Y})$ is strongly continuous if $\|T(s)y-y\|\to 0$ as $s\to 0^+$ for each $y\in\mathbb{Y}.$
	\end{defn}
	\begin{defn}\cite{Hamza and Oraby}\label{def-2.9}
		Let $\mathbb{T}$ be a semigroup time scale containing zero and $\mathcal{L}(\mathbb{Y})$ be the space of all bounded linear operators from $\mathbb{Y}$ into itself. A family $T=\big\{ T(t):t\in\mathbb{T}\big\}\subset\mathcal{L}(\mathbb{Y}),$ $T:\mathbb{T}\to\mathcal{L}(\mathbb{Y})$ is a $C_0-$semigroup if it satisfies the following conditions:
		\begin{enumerate}
			\item $T(s+t)=T(s)T(t),$ for all $s,t\in \mathbb{T}$(the semigroup property).
			\item $T_0=T(0)=I,$ where $I$ is the identity operator on $\mathbb{Y}.$
			\item $\lim\limits_{s\to 0^+}{T(s)y=y},$ i.e., $T(\cdot)y:\mathbb{T}\to \mathbb{Y}$ is continuous at $0$ for each  $y\in \mathbb{Y.}$
		\end{enumerate}
	\end{defn}
	In addition if $\lim\limits_{t\to 0 }\|T(t)-I\|=0,$ the $T$ is called uniformly continuous semigroup. Also if we have one more condition  $^*$ $\|T(s)\|_{\mathbb {S}}\leq 1,$ along with the conditions as in Definition \ref{def-2.9}, then we call $T$ to be the contraction semigroup of class ($C_0$).\\
	
	\indent A linear operator $A$ is called the generator of a $C_0-$semigroup $T$ if $$Ay=\lim\limits_{s\to 0^+}{\frac{T(\mu(t))y-T(s)y}{\mu(t)-s}}, y\in D(A),$$
	where the domain of $A,~D(A)$ is the set of all $y\in \mathbb{Y}$ which the above limit exists uniformly in $t.$\\
	\indent The semigroup $T$ is said to be exponentially stable if there exists $K\geq 1$ and $\alpha> 0$ such that $\|T(t-t_0)\|\leq Ke_{\ominus \alpha}(t,t_0),$ for all $t,t_0\in \mathbb{T}, t>t_0.$\\
	For more details on semigroups on time scale refer to \cite{Hamza and Oraby}.
	\begin{defn}\cite{Kere} \label{defn-2.4}
		Let $A$ be a generator of a $C_0-$semigroup $T=\{T(s):s\in\mathbb{T}\}.$ A function $y:\mathbb{T}\to\mathbb{Y}$ is said to be mild solution of the equation $$y^\Delta(t)=Ay(t)+f(s)$$ if it is rd-continuous and satisfied the integral equation $$y(t)=T(s-s_0)y_0+\int\limits_{s_0}^{s}{T(s-\sigma(t))f(t)}\Delta t.$$
	\end{defn}
	Following the similar definition above, we have the following definition.
	\begin{defn}\label{defn-2.15}
		Let $A$ be the infinitesimal generator of a $C_0$-semigroup $\left\{T(s):s\in\mathbb{T}_0^+\right\}.$ Also assume that $\mathcal{F}$ and $\mathcal{H}$ are functions in $C_{rd}\left(\mathbb{T}\times \mathbb{Y}\times \mathbb{Y},\mathbb{Y}\right)$ and $C_{rd}(\mathbb{T}\times\mathbb{T}\times \mathbb{Y}, \mathbb{Y}),$ respectively. Then a function, $y\in BC_{rd}(\mathbb S, \mathbb Y)$ is a mild solution of \eqref{eq-3.1}-\eqref{eq-3.2} if $y$ satisfies the delta integral equation 
		\begin{equation}
			y(s)=T(s-s_0)y_0+\int\limits_{s_0}^{s}{T(s-\sigma(t))\mathcal{F}\left(t,y(t),\int\limits_{s_0}^{t}{\mathcal{H}(t,\tau,y(\tau))}\Delta\tau\right)}\Delta t.
		\end{equation}
	\end{defn}
	Now we recall some properties of almost automorphic functions on time scales. 
	\begin{defn}\cite[Definition 3.1]{Lizama and Mesquita}
		A time scale $\mathbb{T}$ is called invariant under translations if 
		$$\Pi=\{\tau\in \mathbb{R}:s\pm\tau\in \mathbb{T}, \forall s\in \mathbb{T}\}\neq 0.$$
	\end{defn}
	\begin{rem}
		One can easily verify the fact that a symmetric time scale which has semigroup property and contains zero is also invariant under translation.
	\end{rem}
	\begin{lemma}\cite{Guerekata 1}
		Let $\mathbb{T}$ be invariant under translation time scale. Then 
	\end{lemma}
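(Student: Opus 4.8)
Since the statement is cut off immediately after ``Then,'' I take the intended conclusion to be the standard structural fact that, for a time scale invariant under translation, the forward jump operator and the graininess function are compatible with translations by elements of $\Pi$; that is, for every $t\in\mathbb{T}$ and every $\tau\in\Pi$,
\[
\sigma(t+\tau)=\sigma(t)+\tau \quad\text{and}\quad \mu(t+\tau)=\mu(t).
\]
The plan is to reduce everything to the definition $\sigma(t)=\inf\{s\in\mathbb{T}:s>t\}$ and to exploit that translation by a fixed $\tau\in\Pi$ is a bijection of $\mathbb{T}$ onto itself.

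First I would fix $\tau\in\Pi$ and $t\in\mathbb{T}$ and record the preliminary observation that invariance under translation forces $\mathbb{T}$ to be unbounded both above and below whenever $\Pi\neq\{0\}$: if $\tau>0$ lies in $\Pi$, then $t+n\tau\in\mathbb{T}$ for all $n\in\mathbb{Z}$, so no greatest or least element exists and $\sigma$ is genuinely given by a strict infimum at every point. This removes the usual edge cases (such as $t=\max\mathbb{T}$) in which $\sigma(t)=t$ by convention.

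Next, the key step is the set identity
\[
\{s\in\mathbb{T}:s>t+\tau\}=\tau+\{r\in\mathbb{T}:r>t\}.
\]
For the inclusion from left to right, given $s\in\mathbb{T}$ with $s>t+\tau$, invariance gives $s-\tau\in\mathbb{T}$ and $s-\tau>t$, so $s=\tau+(s-\tau)$ lies in the right-hand set; the reverse inclusion is symmetric. Taking infima and using the elementary fact $\inf(\tau+S)=\tau+\inf S$ yields $\sigma(t+\tau)=\tau+\sigma(t)$. Subtracting $t+\tau$ then gives $\mu(t+\tau)=\sigma(t+\tau)-(t+\tau)=\sigma(t)-t=\mu(t)$, which closes the argument.

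I expect the only real subtlety — the ``hard part'' — to be the bookkeeping needed to justify that $\sigma$ is a strict infimum at each point, i.e. the preliminary unboundedness argument. Once translation invariance is phrased as the bijectivity of the map $s\mapsto s+\tau$ on $\mathbb{T}$, the set identity and the commutation of $\inf$ with translation are purely formal, and the graininess identity is then immediate.
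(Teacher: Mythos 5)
The difficulty here is not your argument in itself but your reconstruction of the statement: you guessed the wrong conclusion. In the paper's source the lemma's conclusion is the itemized list placed immediately after the (prematurely closed) lemma environment, namely: (i) $\Pi\subset\mathbb{T}\Longleftrightarrow 0\in\mathbb{T}$, and (ii) $\Pi\cap\mathbb{T}=\emptyset\Longleftrightarrow 0\notin\mathbb{T}$ (item (ii) is itself garbled in the source, missing the ``$=\emptyset$'', but this is the standard statement from \cite{Guerekata 1}, which the paper simply cites without proof). So the lemma is a dichotomy relating the translation set $\Pi$ to whether $0$ belongs to $\mathbb{T}$; it is not the translation-equivariance of the forward jump operator. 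The fact you proved, $\sigma(t+\tau)=\sigma(t)+\tau$ and $\mu(t+\tau)=\mu(t)$ for $\tau\in\Pi$, is true and standard, but proving it does not establish the lemma in question.

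For what it is worth, your argument for your statement is essentially sound (the set identity $\{s\in\mathbb{T}:s>t+\tau\}=\tau+\{r\in\mathbb{T}:r>t\}$ plus $\inf(\tau+S)=\tau+\inf S$ is exactly the right mechanism, and the unboundedness observation correctly disposes of the $\sigma(\max\mathbb{T})=\max\mathbb{T}$ convention). But the intended lemma needs a different, and in fact shorter, argument, resting on two trivial properties of $\Pi=\{\tau\in\mathbb{R}:s\pm\tau\in\mathbb{T},\ \forall s\in\mathbb{T}\}$: it is symmetric ($\tau\in\Pi\Rightarrow-\tau\in\Pi$) and contains $0$. If $0\in\mathbb{T}$, then every $\tau\in\Pi$ satisfies $\tau=0+\tau\in\mathbb{T}$, giving $\Pi\subset\mathbb{T}$; conversely, if $\Pi\subset\mathbb{T}$, pick $\tau\in\Pi$, so $\tau\in\mathbb{T}$ and hence $0=\tau-\tau\in\mathbb{T}$. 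Similarly, if $0\notin\mathbb{T}$ then no $\tau$ can lie in $\Pi\cap\mathbb{T}$ (else $0=\tau-\tau\in\mathbb{T}$), while if $\Pi\cap\mathbb{T}=\emptyset$ then $0\notin\mathbb{T}$ because $0\in\Pi$. None of the infimum or unboundedness machinery you set up plays any role in this.
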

	\begin{itemize}
		\item[i)] $\Pi\subset\mathbb{T}\Longleftrightarrow0\in\mathbb{T}.$
		\item[ii)] $\Pi\cap \mathbb{T}\longleftrightarrow 0\notin \mathbb{T}.$
	\end{itemize}

	In the following, we present preliminary results concerning almost automorphicity and asymptotically almost automorphicity of functions in the time scale perspective. The concept of almost automorphicity is a more general concept of almost periodic function. For more details on such functions refer to \cite{Lizama and Mesquita}. Asymptotically almost automorphic functions are again some generalization of almost automorphic functions, details of which can be found in \cite{Lizama and Mesquita 1}.
	\begin{defn}\cite[Definition 29]{Guerekata 1} Let $\mathbb{Y}$ be a Banach space and $\mathbb{T}$ be a time scale that is invariant under translation. Then an rd-continuous function $f:\mathbb{T}\to\mathbb{Y}$ is called almost automorphic on $\mathbb{T}$ if for every sequence $(s_n)$ on $\Pi,$ there exists a subsequence $(\tau_n)\subset (s_n)$ such that $$\overline{f}(s)=\lim\limits_{n\to\infty}{f(s+\tau_n)}$$
		is well defined for each $s\in\mathbb{T},$ and $$\lim\limits_{n\to\infty\overline{f}(s-\tau_n)}=f(s)$$
		for each $s\in\mathbb{T}.$
	\end{defn}
	The space of all almost automorphic functions $f:\mathbb{T}\to\mathbb{Y}$ is denoted by $AA(\mathbb{T},\mathbb{Y}).$
	
    It is also a well-known result proved in, that if $\mathbb{T}$ is a symmetric time scale which is also invariant under translation, then the graininess function $\mu:\mathbb{T}\to\mathbb{R_+}$ is almost automorphic.
	
	\begin{rem} The space $AA(\mathbb{T},\mathbb{Y})$ equiped with the norm $\sup\limits_{s\in\mathbb{T}}{\|f(s)\|}$ is a Banach space.
	\end{rem}
	\begin{defn}[Definition 3.20, \cite{Lizama and Mesquita}]
		Let $\mathbb{Y}$ be a (real or complex) Banach space and $\mathbb{T}$ be a symmetric time scale which is invariant under translation. Then an rd-continuous function  $f:\mathbb{T}\times \mathbb{Y}\to \mathbb{Y}$ is called almost automorphic in $s\in\mathbb{T}$ uniformly for $x\in K,$ where $K$ is any compact subset of $\mathbb{Y}$, if for every sequence $(s_n)$ on $\Pi,$ there exists a subsequence $(\tau_n)\subset(s_n)$ such that: 
	\end{defn}
	\begin{equation}\label{eq-2.1}
		\overline{f}(s,y)=\lim\limits_{n\to\infty}{f(s+\tau_n,y)}
	\end{equation} 
	is well defined for each $s\in\mathbb{T}, y\in\mathbb{Y}$ and 
	\begin{equation}\label{eq-2.2}
		\lim\limits_{n\to\infty}{\overline{f}(s-\tau_n,y)}=f(s,y)
	\end{equation}
	for each $s\in\mathbb{T}$ and $y\in\mathbb{Y}.$\\
	\indent We denote by $AA(\mathbb{T}\times\mathbb{Y},\mathbb{Y}),$ the space of all almost automorphic functions $f:\mathbb{T}\times\mathbb{Y}\to \mathbb{Y}$ on time scale $\mathbb{T}.$\\
 Following the similar definitions above, we define almost automorphic functions on a single parameter on functions of the type, $f(\cdot,\cdot,\cdot).$
	\begin{defn}
		An rd-continuous function $f:\mathbb{T}\times Y\times Y$ is said to be an almost automorphic function on $s\in \mathbb{T}$ uniformly for all $(x,y)\in \mathbb{Y}\times\mathbb{Y},$ 
		if for every sequence $(s_n)$ on $\Pi,$ there exists a subsequence $(\tau_n)\subset(s_n)$ such that
	\end{defn}
	\begin{equation}\label{eq-2.4}
		\lim\limits_{n\to\infty}{f(s+\tau_n,x,y)}=\Tilde{f}(s,x,y)
	\end{equation} 
	is well defined for each $s\in\mathbb{T},~x,y\in\mathbb{Y}$ and 
	\begin{equation}\label{eq-2.5}
		\lim\limits_{n\to\infty}{\Tilde{f}(s-\tau_n,x,y)}=f(s,x,y)
	\end{equation}
	for each $s\in\mathbb{T}$ and $x,y\in\mathbb{Y}.$\\
	\indent We denote by $AA(\mathbb{T}\times\mathbb{Y}\times \mathbb{Y},\mathbb{Y}),$ the space of all almost automorphic functions $f:\mathbb{T}\times\mathbb{Y}\times\mathbb{Y}\to \mathbb{Y}$ on time scale $\mathbb{T}.$
	
	\begin{defn}\cite{Lizama and Mesquita}
		An rd-continuous function $f:\mathbb{T}^+\times\mathbb{Y}\times{Y}\to\mathbb{Y}$ is said to be asymptotically almost automorphic if it can be uniquely decomposed as $f=g+\phi,$ where $g\in AA(\mathbb{T}^+\times\mathbb{Y}\times{Y},\mathbb{Y})$ and $\phi\in C_{rd}(\mathbb{T}^+\times\mathbb{Y}\times{Y},\mathbb{Y})$ such that 
		$\lim\limits_{s\to\infty}\|\phi(s,x,y)\|=0,$ for all $(x,y)\in\mathbb{Y}\times\mathbb{Y}.$
	\end{defn}
	The set of all functions, $f:\mathbb{T}^+\times\mathbb{Y}\times\mathbb{Y}\to\mathbb{Y}$  which are asymptotically almost automorphic is denoted by $AAA(\mathbb{T}^+\times\mathbb{Y}\times\mathbb{Y},\mathbb{Y}).$\\
	\textbf{Note:} We denote it by $C_{rd_0}(\mathbb{T}^+\times\mathbb{Y}\times\mathbb{Y},\mathbb{Y})$ being the set of all functions, $f\in C_{rd}(\mathbb{T}^+\times\mathbb{Y}\times{Y},\mathbb{Y}) $ such that $\lim\limits_{s\to\infty}\|\phi(s,x,y)\|=0,$ for all $(x,y)\in\mathbb{Y}\times\mathbb{Y}.$
	\begin{rem}
	If $f=g+\phi$ is asymptotically almost automorphic such that $g$ is principal term and $\phi$ is corrective term, then $$\|f\|=\sup\limits_{t\in\mathbb{T}}\|g(t)\|_{\mathbb Y}+\sup\limits_{t\in\mathbb T^+}\|\phi(t)\|_{\mathbb Y}$$ defines a norm such that $(AAA(\mathbb T^+\times\mathbb Y),\|\cdot\|)$ is a Banach space.
	\end{rem}
	\section{Main Results} In Our first approach, we investigate the existence and uniqueness of the mild solution of the following abstract integro-dynamic IVP
	\begin{eqnarray}
		y^{\Delta}(s) &=&Ay(s)+\mathcal{F}\left(s,y(s),\int\limits_{t_0}^{s}{\mathcal{H}(s,\tau,y(\tau))}\Delta\tau\right), \label{eq-3.1}\\
		y(s_0)&=&y_0 \label{eq-3.2}.
	\end{eqnarray}
 where $s\in\mathbb{T}^k.$\\
 \begin{lemma}
     Let $A$ be the infinitesimal generator of the $C_0$-semigroup $\left\{T(s):s\in\mathbb{T}_0^+\right\}.$ Also assume that $\mathcal{F}$ and $\mathcal{H}$ are functions in $C_{rd}\left(\mathbb{T}\times \mathbb{Y}\times \mathbb{Y},\mathbb{Y}\right)$ and $C_{rd}(\mathbb{T}\times\mathbb{T}\times \mathbb{Y}, \mathbb{Y}),$ respectively. Then $y$ is a mild solution of \eqref{eq-3.1}-\eqref{eq-3.2} iff $y$ satisfies the $\Delta-$integral equation 
		\begin{equation}
			y(s)=T(s-s_0)y_0+\int\limits_{s_0}^{s}{T(s-\sigma(t))\mathcal{F}\left(t,y(t),\int\limits_{s_0}^{t}{\mathcal{H}(t,\tau,y(\tau))}\Delta\tau\right)}\Delta t.
		\end{equation}
 \end{lemma}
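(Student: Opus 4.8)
The plan is to prove both implications by the time-scale analogue of Duhamel's principle, treating the composite forcing term $f(t) := \mathcal{F}\!\left(t,y(t),\int_{s_0}^{t}\mathcal{H}(t,\tau,y(\tau))\Delta\tau\right)$ as a single function of $t$. Its rd-continuity follows from that of $\mathcal{F}$ and $\mathcal{H}$ together with continuity of the $\Delta$-integral in its upper limit, so $f\in C_{rd}(\mathbb{T},\mathbb{Y})$. With $f$ isolated, the claim reduces to the equivalence between the abstract Cauchy problem $y^\Delta=Ay+f$, $y(s_0)=y_0$, and the variation-of-constants formula $y(s)=T(s-s_0)y_0+\int_{s_0}^{s}T(s-\sigma(t))f(t)\Delta t$, which I would establish directly.

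For the forward implication, I would assume $y$ solves \eqref{eq-3.1}--\eqref{eq-3.2}, so that $y(t)\in D(A)$ and $y^\Delta(t)=Ay(t)+f(t)$ for every $t$. Fixing $s\in\mathbb{T}^k$, I would introduce the auxiliary map $g(t)=T(s-t)y(t)$ on $[s_0,s]_{\mathbb{T}}$ and compute $g^\Delta$ via the time-scale product rule $(uv)^\Delta=u^\Delta v+u^\sigma v^\Delta$ with $u(t)=T(s-t)$ and $v(t)=y(t)$. Using the semigroup differentiation identity $[T(s-t)]^\Delta=-A\,T(s-\sigma(t))$ and the commutation $A\,T(s-\sigma(t))y(t)=T(s-\sigma(t))Ay(t)$ valid on $D(A)$, the two ``$A$-terms'' cancel and one is left with $g^\Delta(t)=T(s-\sigma(t))f(t)$. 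Integrating this identity from $s_0$ to $s$, and evaluating $g(s)=T(0)y(s)=y(s)$ together with $g(s_0)=T(s-s_0)y_0$ via $T(0)=I$, yields the stated integral equation.

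For the converse, I would assume $y$ satisfies the $\Delta$-integral equation. Setting $s=s_0$ collapses the integral and gives $y(s_0)=T(0)y_0=y_0$, recovering \eqref{eq-3.2}. To recover \eqref{eq-3.1}, I would $\Delta$-differentiate both sides in $s$: the leading term contributes $A\,T(s-s_0)y_0$, while the integral term is handled by the time-scale Leibniz rule $\frac{\Delta}{\Delta s}\int_{s_0}^{s}k(s,t)\Delta t=k(\sigma(s),s)+\int_{s_0}^{s}k^\Delta_s(s,t)\Delta t$ applied to the kernel $k(s,t)=T(s-\sigma(t))f(t)$. The boundary contribution is $k(\sigma(s),s)=T(0)f(s)=f(s)$, and the interior contribution is $\int_{s_0}^{s}A\,T(s-\sigma(t))f(t)\Delta t$; pulling $A$ back out of the integral reassembles $Ay(s)+f(s)$, which is exactly \eqref{eq-3.1}.

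I expect the main obstacle to be the two points where the unbounded generator $A$ must be moved past a limiting operation. First, establishing the delta-derivative identity for $t\mapsto T(s-t)$ on a general time scale is delicate, since the chain rule is unavailable in its classical form; I would justify it from the generator definition and the semigroup property, which is also where the shift by $\sigma$ in $T(s-\sigma(t))$ originates. Second, interchanging $A$ with the $\Delta$-integral (in both directions) requires that the integrand remain in $D(A)$ with $A$ applied to it rd-continuous, after which the closedness of $A$—automatic for a generator of a $C_0$-semigroup—permits the interchange by approximating the $\Delta$-integral with its Riemann-type sums. The remaining steps, namely the product and Leibniz rules and the endpoint evaluations, are routine once these two interchanges are secured.
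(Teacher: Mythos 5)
Your forward implication (a classical solution of \eqref{eq-3.1}--\eqref{eq-3.2} satisfies the integral equation) is the standard variation-of-constants argument and is essentially sound: the auxiliary map $g(t)=T(s-t)y(t)$, the product rule $(uv)^\Delta=u^\Delta v+u^\sigma v^\Delta$, the identity $[T(s-t)]^\Delta=-A\,T(s-\sigma(t))$ on $D(A)$, and the commutation $A\,T(s-\sigma(t))y(t)=T(s-\sigma(t))Ay(t)$ are exactly the right ingredients. The genuine gap is in your converse. In the abstract Banach-space setting of this paper $A$ is an \emph{unbounded} generator, and a function satisfying the integral equation need not take values in $D(A)$ nor be $\Delta$-differentiable, hence need not satisfy \eqref{eq-3.1} pointwise at all. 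The simplest obstruction: take $\mathcal{F}\equiv 0$ and $y_0\notin D(A)$; then $y(s)=T(s-s_0)y_0$ satisfies the integral equation, but $Ay(s)$ is not even defined, so $y$ cannot solve \eqref{eq-3.1}. Your plan to apply the Leibniz rule and then ``pull $A$ back out of the integral'' by closedness presupposes that $T(s-\sigma(t))f(t)\in D(A)$ with $A$ applied to it rd-continuous; under the stated hypotheses (mere rd-continuity of $\mathcal{F}$ and $\mathcal{H}$) this is simply unavailable. This is not a technical interchange to be ``secured'' --- it is the precise reason the notion of mild solution exists as a concept distinct from classical solution, and no amount of care with Riemann sums will repair it.

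The mismatch with the paper is worth noting. Here ``mild solution'' is \emph{defined} (Definition \ref{defn-2.15}) as a function satisfying the very $\Delta$-integral equation appearing in the lemma, so the lemma as stated is essentially definitional, and the paper's entire proof is a citation to \cite[Lemma 3.1]{Bohner 3} --- a result set in $\mathbb{R}^n$, where the coefficient operator is a matrix (bounded), so that a differentiation argument like your converse is legitimate there. You instead read the lemma as the equivalence between \emph{classical} solutions and the integral equation; that stronger statement is what your proof attempts, its forward half is fine, but its converse half is false in general for unbounded $A$. To make your reading correct you would need extra hypotheses (e.g.\ $y_0\in D(A)$ and the forcing term $D(A)$-valued with $A$ composed with it rd-continuous, or a differentiability assumption on $t\mapsto\mathcal{F}(t,y(t),\cdot)$), or else retreat to the definitional reading that the paper actually uses.
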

		
		\begin{proof}
			For proof of the lemma we refer \cite[Lemma 3.1]{Bohner 3}.
		\end{proof}
	
	
	In the remainder of this paper, we will consider $\mathbb{T}$ as a symmetric time scale with the semigroup property and contains zero. We say that $A$ generates an exponentially stable $C_0-$semigroup, i,e. there exist $M\geq 1$ and $\alpha>0$ such that 
	\begin{equation}\label{eq-3.4}
		\|T(s-s_0)\|\leq M e_{\ominus\alpha}(s,s_0), \text{for all}~s,s_0\in\mathbb{T},s\geq s_0.
	\end{equation}  
	
	\begin{thm}\label{thm-3.1}
		Consider the following hypothesis
		\begin{itemize}
			\item[(H$_1$)] Let $\mathcal{F}:\mathbb{T}\times Y\times Y\to Y$ rd-continuous function such that $$\|\mathcal{F}(s,x_1,y_1)-\mathcal{F}(s,x_2,y_2)\|\leq L_{\mathcal{F}}(s)\left(\|x_1-x_2\|+\|y_1-y_2\|\right)$$ for all $t\in\mathbb{T}$ and $x_i,y_i\in Y,~L_{\mathcal{F}}\in\mathcal{R^+}(\mathbb{T},\mathbb{R^+}).$
			\item[(H$_2$)] $\mathcal{H}:\mathbb{T}\times\mathbb{T}\times Y\to Y$ is an rd-continuous function on its first and second variable and continuous on its third variable with $$\|\mathcal{H}(t,s,y_1)-\mathcal{H}(t,s,y_2)\|\leq L_{\mathcal{H}}(s)\|y_1-y_2\|~\forall~ t,s\in\mathbb{T},~y_i\in Y~\text{where}~L_{\mathcal{H}}\in\mathcal{R^+}(\mathbb{T},\mathbb{R^+}).$$
			\item[(H$_3$)] $A$ is the generator of an exponentially stable $C_0$-semigroup, $\left\{T(s):s\in\mathbb{T}_0^+\right\}.$
			\item[(H$_4$)] $M(S-s_0)L_{\mathcal{F}}^*(1+L_{\mathcal{H}}^*(S-s_0))<1.$
		\end{itemize}
		Then \eqref{eq-3.1}-\eqref{eq-3.2} has a unique mild solution whenever $M_\mathcal{F}=\sup\big\{\|\mathcal{F}(s,0,\Psi)\|_{\mathbb Y};~s\in \mathbb{T},~\Psi \in \mathbb Y \big\}<\infty.$ 
	\end{thm}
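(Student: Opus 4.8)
The plan is to recast the mild-solution formula from the preceding lemma as a fixed point equation and then invoke the Banach contraction principle, since hypothesis (H$_4$) is a smallness condition rather than a compactness assumption. Concretely, on the Banach space $X = BC_{rd}(\mathbb{T},\mathbb{Y})$ equipped with the supremum norm $\|\cdot\|_\infty$ (recall $\mathbb{T}=[s_0,S]_{\mathbb{T}}$ is compact, so rd-continuous functions are automatically bounded), I define the operator
\[
(\Phi y)(s) = T(s-s_0)y_0 + \int_{s_0}^{s} T(s-\sigma(t))\,\mathcal{F}\!\left(t,y(t),\int_{s_0}^{t}\mathcal{H}(t,\tau,y(\tau))\,\Delta\tau\right)\Delta t,
\]
so that $y$ is a mild solution of \eqref{eq-3.1}--\eqref{eq-3.2} precisely when $\Phi y = y$. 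The argument then splits into (i) showing $\Phi$ maps $X$ into itself and (ii) showing $\Phi$ is a contraction with constant given by the left-hand side of (H$_4$).

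For step (i), the key is boundedness of $\Phi y$. Writing $P(t)=\int_{s_0}^{t}\mathcal{H}(t,\tau,y(\tau))\Delta\tau$ and splitting
\[
\mathcal{F}(t,y(t),P(t)) = \big[\mathcal{F}(t,y(t),P(t))-\mathcal{F}(t,0,P(t))\big] + \mathcal{F}(t,0,P(t)),
\]
(H$_1$) bounds the bracket by $L_{\mathcal{F}}(t)\|y(t)\|$ while the definition of $M_\mathcal{F}$ bounds the last term by $M_\mathcal{F}$; this is exactly where the hypothesis $M_\mathcal{F}<\infty$ enters. Combining with $\|T(s-\sigma(t))\|\le M$ --- which follows from the exponential-stability bound \eqref{eq-3.4} in (H$_3$) together with $e_{\ominus\alpha}(s,\sigma(t))\le 1$ from Theorem \ref{thm-2.2} --- yields $\|\Phi y\|_\infty \le M\|y_0\| + M(S-s_0)\big(L_{\mathcal{F}}^*\|y\|_\infty + M_\mathcal{F}\big) < \infty$, where $L_{\mathcal{F}}^* = \sup_{s}L_{\mathcal{F}}(s)$. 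The rd-continuity of $\Phi y$ follows from the continuity of $T(\cdot)$, the rd-continuity of $\mathcal{F}$ and $\mathcal{H}$, and standard continuity properties of the $\Delta$-integral, so $\Phi(X)\subset X$.

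For step (ii), given $y_1,y_2\in X$ I estimate $\|(\Phi y_1)(s)-(\Phi y_2)(s)\|$ by pulling $\|T(s-\sigma(t))\|\le M$ out of the integral and applying (H$_1$) to the integrand. The inner term is controlled by (H$_2$):
\[
\left\|\int_{s_0}^{t}\big(\mathcal{H}(t,\tau,y_1(\tau))-\mathcal{H}(t,\tau,y_2(\tau))\big)\Delta\tau\right\|
\le L_{\mathcal{H}}^*(t-s_0)\,\|y_1-y_2\|_\infty,
\]
with $L_{\mathcal{H}}^* = \sup_s L_{\mathcal{H}}(s)$, which combines with the first-argument Lipschitz term to give an integrand bound $M L_{\mathcal{F}}(t)\big(1+L_{\mathcal{H}}^*(t-s_0)\big)\|y_1-y_2\|_\infty$. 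Bounding $L_{\mathcal{F}}(t)\le L_{\mathcal{F}}^*$ and $(t-s_0)\le (S-s_0)$, integrating over $[s_0,s]$, and then taking the supremum in $s$ produces
\[
\|\Phi y_1-\Phi y_2\|_\infty \le M(S-s_0)L_{\mathcal{F}}^*\big(1+L_{\mathcal{H}}^*(S-s_0)\big)\,\|y_1-y_2\|_\infty,
\]
whose coefficient is exactly the quantity assumed $<1$ in (H$_4$). Hence $\Phi$ is a contraction on the Banach space $X$, and the Banach fixed point theorem yields a unique fixed point, that is, a unique mild solution of \eqref{eq-3.1}--\eqref{eq-3.2}.

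The routine parts are the two Lipschitz chains; the one point deserving care --- and the \emph{main obstacle} --- is verifying that $\Phi$ genuinely lands in $BC_{rd}(\mathbb{T},\mathbb{Y})$, that is, the joint rd-continuity of $s\mapsto (\Phi y)(s)$ through the nested $\Delta$-integrals, and confirming that the constants telescope precisely into the form appearing in (H$_4$) rather than into a weaker bound. Once these are in place the contraction principle closes the argument; note that, unlike the almost-automorphic existence results later in the paper, no compactness or Krasnoselskii-type machinery is needed here, because (H$_4$) already forces strict contractivity.
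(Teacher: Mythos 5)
Your proof is correct, but it is a genuinely different route from the paper's. The paper does \emph{not} use the Banach contraction principle on the full operator: it splits $\mathcal{W}=\mathcal{W}_1+\mathcal{W}_2$, where $\mathcal{W}_1(y)(s)=T(s-s_0)y_0+\int_{s_0}^{s}T(s-\sigma(t))\mathcal{F}\bigl(t,0,\int_{s_0}^{t}\mathcal{H}(t,\tau,0)\Delta\tau\bigr)\Delta t$ and $\mathcal{W}_2$ is the Lipschitz difference part, shows $\mathcal{W}_1$ is completely continuous via Arzel\`a--Ascoli, shows $\mathcal{W}_2$ is a contraction under (H$_4$), applies Krasnoselskii's theorem on a ball $B_k$ with $k=2M(\|y_0\|+M_{\mathcal F})$ to get existence, and then runs a \emph{separate} uniqueness argument (Step 4) via the Gronwall-type inequality of Lemma \ref{lemma-2.2}. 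Your observation that none of this machinery is needed is sound: in the difference $(\Phi y_1)(s)-(\Phi y_2)(s)$ the terms $T(s-s_0)y_0$ cancel, so the whole operator contracts with exactly the constant $M(S-s_0)L_{\mathcal F}^*\bigl(1+L_{\mathcal H}^*(S-s_0)\bigr)$ of (H$_4$), and Banach's theorem delivers existence and uniqueness simultaneously. In fact your route exposes a degeneracy in the paper's argument: the paper's $\mathcal{W}_1$ does not depend on $y$ at all (it is a constant map), so Krasnoselskii there reduces to the contraction principle anyway, and the compactness work (boundedness, equicontinuity, Arzel\`a--Ascoli) buys nothing. What the paper's framework would buy is robustness if the Lipschitz smallness (H$_4$) held only for part of the nonlinearity and the rest were merely compact; under the stated hypotheses, your direct argument is both shorter and complete, with the two estimates (invariance of $BC_{rd}$ using $M_{\mathcal F}<\infty$, and the telescoped contraction constant) carried out correctly.
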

	\begin{proof} Define a ball, $B_k\subset C_{rd}(\mathbb{T}\times\mathbb{Y})$ as $B_k=\left\{y\in C_{rd}(\mathbb{T}\times\mathbb{Y}):\|y\|_{\mathbb{Y}}\leq k\right\},$ where $k=2M(\|y_0\|+M_{\mathscr{F}}).$
		Let us also define a function $\mathcal{W}:B_k\to  C_{rd}(\mathbb{T}\times\mathbb{Y})$ defined as
		\begin{equation}\label{eq-3.5}
			\mathcal{W}(y)(s):=T(s-s_0)y_0+\int\limits_{s_0}^{s}{T(s-\sigma(t))\mathcal{F}\left(t,y(t),\int\limits_{s_0}^{t}{\mathcal{H}(t,\tau,y(\tau))}\Delta\tau\right)}\Delta t.
		\end{equation}
		In order to apply the Krasnoselski fixed point theorem given by Theorem \ref{thm-2.1}, we express $\mathcal{W}$ as $$\mathcal{W}(y)(s)=\mathcal{W}_1(y)(s)+\mathcal{W}_2(y)(s),$$ where 
		\begin{equation}\label{eq-3.6}
			\mathcal{W}_1(y)(s):=T(s-s_0)y_0+\int\limits_{s_0}^{s}{T(s-\sigma(t))\mathcal{F}\left(t,0,\int\limits_{s_0}^{t}{\mathcal{H}(t,\tau,0)}\Delta\tau\right)}\Delta t.
		\end{equation}
		and 
		\begin{eqnarray}
			\mathcal{W}_2(y)(s):&=\int\limits_{s_0}^{s}T(s-\sigma(t))\left[\mathcal{F}\left(t,y(t),\int\limits_{s_0}^{t}{\mathcal{H}(t,\tau,y(\tau))}\Delta\tau\right)\notag\right.\notag\\
			&~-\left.\mathcal{F}\left(t,0,\int\limits_{s_0}^{t}{\mathcal{H}(t,\tau,0)}\Delta\tau\right)\right]\Delta t.\label{eq-3.7}.
		\end{eqnarray}
		It is obvious to see that $\mathcal{W}_1$ is continuous, We show that $\mathcal{W}_1$ is completely continuous and $\mathcal{W}_2$ is a contraction.\\
		
		\textbf{Step 1:} $\mathcal{W}_1:B_k\to  C_{rd}(\mathbb{J}\times\mathbb{Y})$ is completely continuous.
		For $y\in B_k,$ we have 
		\begin{eqnarray*}
			\|\mathcal{W}_1(y)\|_{\mathbb Y}&=&\Bigg\|T(s-s_0)y_0+\int\limits_{s_0}^{s}{T(s-\sigma(t))\mathcal{F}\left(t,0,\int\limits_{0}^{t}{\mathcal{H}(t,\tau,0)}\Delta\tau\right)}\Delta t\Bigg\|_{\mathbb Y}\\
			&\leq&\|T(s-s_0)\|_{\mathbb Y}~\|y_0\|_{\mathbb Y}+\int\limits_{s_0}^{s}{\Bigg\|T(s-\sigma(t))\mathcal{F}\left(t,0,\int\limits_{s_0}^{t}{\mathcal{H}(t,\tau,0)}\Delta\tau\right)}\Bigg\|_{\mathbb Y}\Delta t\\
			&\leq& M e_{\ominus\alpha}(s,s_0)\|y_0\|_{\mathbb Y}+ M M_{\mathcal{F}}\int\limits_{s_0}^{S}{e_{\ominus\alpha}}(s,\sigma(t))\Delta t\quad (\text{using equation \eqref{eq-3.4}})\\
			&\leq& M(\|y_0\|_{\mathbb {Y}}+M_{\mathcal{F}}(S-s_0))\quad(\text{using Theorem \ref{thm-2.2}}).
		\end{eqnarray*}
		Hence we see that $\mathcal{W}_1$ is bounded in $B_k.$
		
		Next, we test equicontinuity of $\mathcal{W}_1(B_k)$. 
		Let $s_1, s_2\in \mathbb{T}$ and $y\in B_k.$ Then
		\begin{eqnarray*}
			\|\mathcal{W}_1(y)(s_2)-\mathcal{W}_1(y)(s_1)\|_{\mathbb{Y}}&=&\Bigg\|T(s_2-s_0)y_0+\int\limits_{s_0}^{s_2}{T(s_2-\sigma(t))\mathcal{F}\left(t,0,\int\limits_{0}^{t}{\mathcal{H}(t,\tau,0)}\Delta\tau\right)}\Delta t\\
			&-&T(s_1-s_0)y_0+\int\limits_{s_0}^{s_1}{T(s_1-\sigma(t))\mathcal{F}\left(t,0,\int\limits_{s_0}^{t}{\mathcal{H}(t,\tau,0)}\Delta\tau\right)}\Delta t\Bigg\|_{\mathbb{Y}}\\
			&\leq&\|T(s_2-s_0)-T(s_1-s_0)y_0\|_{\mathbb{Y}}\\ 
			&+&\Bigg\|\int\limits_{s_0}^{s_2}{T(s_2-\sigma(t))\mathcal{F}\left(t,0,\int\limits_{s_0}^{t}{\mathcal{H}(t,\tau,0)}\Delta\tau\right)}\Delta t\Bigg\|_{\mathbb{Y}}\\
			&+& \Bigg\|\int\limits_{s_0}^{s_1}{T(s_1-\sigma(t))\mathcal{F}\left(t,0,\int\limits_{0}^{t}{\mathcal{H}(t,\tau,0)}\Delta\tau\right)}\Delta t\Bigg\|_{\mathbb{Y}}\\
			&\leq&\|T(s_2-s_0)-T(s_1-s_0)y_0\|_{\mathbb{Y}}\\ 
			&+& \Bigg\|T(s_2-s_0)\int\limits_{s_0}^{s_1}{T(s_0-\sigma(t))\mathcal{F}\left(t,0,\int\limits_{s_0}^{t}{\mathcal{H}(t,\tau,0)}\Delta\tau\right)}\Delta t\Bigg\|_{\mathbb{Y}}\\
			&+&\Bigg\| T(s_2-s_0)\int\limits_{s_1}^{s_2}{T(s_0-\sigma(t))\mathcal{F}\left(t,0,\int\limits_{s_0}^{t}{\mathcal{H}(t,\tau,0)}\Delta\tau\right)}\Delta t\Bigg\|_{\mathbb{Y}}\\
			&+&\Bigg\|T(s_1-s_0) \int\limits_{0}^{s_1}{T(s_0-\sigma(t))\mathcal{F}\left(t,0,\int\limits_{s_0}^{t}{\mathcal{H}(t,\tau,0)}\Delta\tau\right)}\Delta t\Bigg\|_{\mathbb{Y}}
   \end{eqnarray*}
   \begin{eqnarray*}
            \Rightarrow\|\mathcal{W}_1(y)(s_2)-\mathcal{W}_1(y)(s_1)\|_{\mathbb{Y}}&\leq&\big\|T(s_2-s_0)-T(s_1-s_0)\big\|_{\mathbb{Y}}\left(\|y_0\|_\mathbb{Y}+\Bigg\|\int\limits_{s_0}^{s_1}T(s_0-\sigma(t))\right.\\
            &&\qquad\qquad\qquad\qquad\left.\mathcal{F}\left(t,0,\int\limits_{s_0}^{t}{\mathcal{H}(t,\tau,0)}\Delta\tau\right)\Delta t\Bigg\|_{\mathbb{Y}}\right)\\
			&+& \Bigg\| \int\limits_{s_1}^{s_2}{T(s_2-\sigma(t))\mathcal{F}\left(t,0,\int\limits_{s_0}^{t}{\mathcal{H}(t,\tau,0)}\Delta\tau\right)}\Delta t\Bigg\|_{\mathbb{Y}}\\
			&\leq&\big\|T(s_2-s_0)-T(s_1-s_0)\big\|_{\mathbb{Y}}\left(\|y_0\|_{\mathbb{T}}+MM_{\mathcal{F}}\int\limits_{s_0}^{s_1}{e_{\ominus\alpha}(0,\sigma(t))}\right) \\
			&+&M_{\mathcal{F}}\int\limits_{s_1}^{s^2}{e_{\ominus\alpha}(s_2,\sigma(t))}\Delta t\\
			&\leq&\big\|T(s_2-s_0)-T(s_1-s_0)\big\|_{\mathbb{Y}}\left(\|y_0\|_{\mathbb{T}}+MM_{\mathcal{F}}\int\limits_{s_0}^{s_1}{e_{\alpha}(\sigma(t),0)}\right) \\
			&+&M_{\mathcal{F}}(s_2-s_1) \quad(\text{By using Theorem \ref{thm-2.2}}).
		\end{eqnarray*}
        We will have a similar inequality when we take $t_1>t_2.$ Since $T$ represents a $C_0-$semigroup, $T$ is continuous, and hence the first part of the above inequality tends to zero as $s_2$ tends to $s_1.$ Thus it is obvious that the right-hand side of the above inequality tends to zero as $s_2$ tends to $s_1,$ thus confirming the equicontinuity of $\mathcal{W}_1(B_k)$ by compact mapping theorem. 
		
		Since $\mathcal{W}_1(B_k)$ is both equicontinuous and bounded, by Arzela Ascoli Theorem, ${W}_1$ is compact. Since every compact operator is also completely continuous and subsequently it is completely continuous.\\
		
		\textbf{Step 2:} We show that $\mathcal{W}_2:B_k\to  C_{rd}(\mathbb{T}\times\mathbb{Y})$ is a contraction. 
		Let $x,y\in B_k,$ then we have
		\begin{eqnarray*}
			\|\mathcal{W}[y](s)-\mathcal{W}[x](s)\|_{\mathbb Y}
			&\leq&\int\limits_{s_0}^{s}\|T(s-\sigma(t))\|_{\mathbb {Y}}\left\|\mathcal{F}\left(t,y(t),\int\limits_{s_0}^{t}{\mathcal{H}(t,\tau,y(\tau))}\Delta\tau\right)\right.\\
			&&\left.\mathcal{F}\left(t,x,\int\limits_{s_0}^{t}{\mathcal{H}(t,\tau,x(\tau))}\Delta\tau\right)\right.\|_{\mathbb{Y}}\Delta t.\\
			&\leq& M\int\limits_{s_0}^{s}{e_{\ominus \alpha}(s,\sigma(t))}L_{\mathcal{F}}(t)\left(\|y(t)-x(t)\|+\int\limits_{s_0}^{t}{L_{\mathcal{H}}(\tau)\|y(\tau)-x(\tau)\|_{\mathbb Y}}\Delta \tau\right) \Delta t\\
            &\leq& M\int\limits_{s_0}^{s}e_{\ominus \alpha}(s,\sigma(t))L_{\mathcal{F}}(t)\left(1+L_{\mathcal{H}}^*(t-0)\right) \|y(t)-x(t)\|_{\mathbb Y}\Delta t\\
            &\leq& ML_{\mathcal{F}}^*\left(1+L_{\mathcal{H}}^*(S-s_0)\right)\|y-x\|_{\mathbb Y}\int\limits_{s_0}^{s}e_{\ominus \alpha}(s,\sigma(t)) \Delta t\\
            &\leq& \ ML_{\mathcal{F}}^*(1+L_{\mathcal{H}}^*(S-s_0))\|y-x\|_{\mathbb Y} <\|y-x\|_{\mathbb Y}.
   \end{eqnarray*}
		By the conditions $(H_4),$ we see that $\mathcal{W}_2$ is a contraction.\\
		
		\textbf{Step 3:} For $x,y\in B_k,$ $\mathcal{W}_1(x)(s)+\mathcal{W}_1(y)(s)\in B_k,~ \forall ~s\in\mathbb{T}^r.$ 
		We have \begin{eqnarray*}
			\big\|\mathcal{W}_1[y](s)+\mathcal{W}_2[x](s)\big\|_{\mathbb Y}&=& \Bigg\|T(s-s_0)y_0+\int\limits_{s_0}^{s}{T(s-\sigma(t))\mathcal{F}\left(t,0,\int\limits_{s_0}^{t}{\mathcal{H}(t,\tau,0)}\Delta\tau\right)}\Delta t \\
			&+&\int\limits_{s_0}^{s}{T(s-\sigma(t))\left[\mathcal{F}\left(t,y(t),\int\limits_{s_0}^{t}{\mathcal{H}(t,\tau,y(\tau)}\right)\Delta\tau\right)}\\
			&-&\mathcal{F}\left(t,0,\int\limits_{s_0}^{t}{\mathcal{H}(t,\tau,0)}\Delta\tau\right)\Bigg]\Delta t\Bigg\|_{\mathbb{Y}}\\
			&\leq&\|T(s-s_0)y_0\|+\Bigg\|\int\limits_{s_0}^{s}{T(s-\sigma(t))\mathcal{F}\left(t,0,\int\limits_{s_0}^{t}{\mathcal{H}(t,\tau,0)}\Delta\tau\right)}\Delta t\Bigg\|\\
			&+&\int\limits_{s_0}^{s}{\|T(s-\sigma(t))\|\Bigg\|\mathcal{F}\left(t,y(t),\int\limits_{s_0}^{t}{\mathcal{H}(t,\tau,y(\tau)}\right)\Delta\tau}\\
			\qquad \qquad &-&\mathcal{F}\left(t,0,\int\limits_{s_0}^{t}{\mathcal{H}(t,\tau,0)}\Delta\tau\right)\Bigg\|\Delta t_{\mathbb{Y}}\\
			&\leq& M\|y_0\|+ MM_{\mathcal{F}}+(S-s_0)(L_{\mathcal{F}}^*(1+ L_{\mathcal{H}}^*(S-s_0))\|x\|\\
			&=& M(\|y_0\|+ M_{\mathcal{F}})+M(S-s_0)(L_{\mathcal{F}}^*(1+ L_{\mathcal{H}}^*(S-s_0))k\\
			&\leq& k.
		\end{eqnarray*}
		
		\textbf{Step 4:} If possible let us suppose that $y_1, y_2$ are two distinct solutions of the IVP, then for any $ s\in\mathbb{T},$ using (H$_1$)-(H$_3$), we get 
		\begin{eqnarray*}
			\|y(s)-x(s)\|_{\mathbb Y}
			&\leq&\int\limits_{s_0}^{s}\|T(s-\sigma(t))\|_{\mathbb {Y}}\left\|\mathcal{F}\left(t,y(t),\int\limits_{s_0}^{t}{\mathcal{H}(t,\tau,y(\tau))}\Delta\tau\right)\right.\\
			&&\left.\mathcal{F}\left(t,x,\int\limits_{s_0}^{t}{\mathcal{H}(t,\tau,x(\tau))}\Delta\tau\right)\right.\|_{\mathbb{Y}}\Delta t.\\
			&\leq& M\int\limits_{s_0}^{s}{e_{\ominus \alpha}(s,\sigma(t))}L_{\mathcal{F}}(t)\left(\|y(t)-x(t)\|+\int\limits_{s_0}^{t}{L_{\mathcal{H}}(\tau)\|y(\tau)-x(\tau)\|_{\mathbb Y}}\Delta \tau\right) \Delta t\\
			&\leq& \int\limits_{s_0}^{s}M\frac{ (1+\Tilde{\mu}\alpha)}{\alpha}L_{\mathcal{F}}(t)\left(\|y(t)-x(t)\|+\int\limits_{s_0}^{t}{L_{\mathcal{H}}(\tau)\|y(\tau)-x(\tau)\|_{\mathbb Y}}\Delta \tau\right) \Delta t.
		\end{eqnarray*}
		Using the \ref{lemma-2.2}, from the above inequality we get
		$\|y(s)-x(s)\|_{\mathbb Y}\leq 0,$ which gives $y=x.$ Hence the theorem.
	\end{proof}
 Inspired by the definition of bi-almost automorphic function in $\mathbb{R}$ as in \cite{Xiao Zhu Liang}, we define the following:
	\begin{defn}\textbf{(Bi-almost automorphic function)} A function $H(s,t):\mathbb{T}\times\mathbb{T}\to \mathbb{Y}$ which is rd-continuous with respect to both its variables, is called bi-almost automorphic if for every sequence $(s_n)$ on $\Pi,$ there exists a subsequence $(\tau_n)\subset(s_n)$ such that
		\begin{equation}
			\Tilde{H}(s,t)=\lim\limits_{n\to\infty}{H(s+\tau_n,t+\tau_n)}
		\end{equation} is well defined for each $s,t\in\mathbb{T}$ and \begin{equation}
			\lim\limits_{n\to\infty}{\Tilde{H}(s-\tau_n,t-\tau_n)}=H(s,t)
		\end{equation} for each $s,t\in\mathbb{T}.$
	\end{defn}
	By $bAA(\mathbb{T}\times\mathbb{T},\mathbb{Y}),$ we denote the set of all those bi-almost automorphic functions.
	\begin{rem}
		The notion of bi-almost automorphicity is the generalization of the function $H(s,t)$ having the same period with respect to both of its variables. i.e., $H(s+T,t+T)=H(s,t)~\forall s,t\in \mathbb{T}~\text{for some}~T\in \mathbb{R}-\{0\}.$
	\end{rem} 
	\begin{defn}
		A function $H(s,t,y):\mathbb{T}\times\mathbb{T}\times\mathbb{Y}\to \mathbb{Y}$ which is rd-continuous in its first and second variable, is called bi-almost automorphic if for every sequence $(s_n)$ on $\Pi,$ there exists a subsequence $(\tau_n)\subset(s_n)$ such that
		\begin{equation}
			\Tilde{H}(s,t,y)=\lim\limits_{n\to\infty}{H(s+\tau_n,t+\tau_n,y)}
		\end{equation} is well defined for each $s,t\in\mathbb{T}$ uniformly in $\mathbb{Y}$ and 
		\begin{equation}
			\lim\limits_{n\to\infty}{\Tilde{H}(s-\tau_n,t-\tau_n,y)}=H(s,t,y)
		\end{equation} for each $s,t\in\mathbb{T}$ uniformly in $\mathbb{Y}.$
	\end{defn}
	By $bAA(\mathbb{T}\times\mathbb{T}\times\mathbb{Y},\mathbb{Y}),$ we denote the set of all those bi-almost automorphic functions.
	\begin{defn}[\textbf{bi-asymptotically almost automorphic function}]
		A function $f:\mathbb{T}\times\mathbb{T}\to\mathbb{Y},$ which is rd-continuous with respect to both of its variables, is said to be bi-asymptotically almost automorphic if the function $f(s,t)$ has a unique decomposition, $f(s,t)=g(s,t)+h(s,t)$ with $g\in bAA(\mathbb{T}\times\mathbb{T},\mathbb{Y})$ and $h\in C_{rd_0}(\mathbb{T}^+\times\mathbb{T}^+),$ i.e., $h$ is rd-continuus with respect to both the variables and $\lim\limits_{(s,t)\to (\infty,\infty)}{h(s,t)}=0.$
	\end{defn}
	In the following we establish results concerning the existence and uniqueness of the bounded, asymptotically almost automorphic solution to the given IVP; \eqref{eq-3.1}-\eqref{eq-3.2}.
	Let $y\in BC_{rd}(\mathbb{T},\mathbb{Y})$  and consider the following hypothesis
	\begin{itemize}
		\item[(H$_1^{'}$)] Let $\mathcal{F}(s,x,y)=\mathcal{G}(s,x,y)+\mathcal{I}(s,x,y)\in AAA(\mathbb{S^+}\times Y\times Y,Y)$ be a function satisfying,  $$\|\mathcal{F}(s,x_1,y_1)-\mathcal{F}(s,x_2,y_2)\|\leq L_{\mathcal{F}}(s)\left(\|x_1-x_2\|+\|y_1-y_2\|\right)$$
		for all $s\in\mathbb{T}$ and $x_i,y_i\in Y,~L_{\mathcal{F}}\in AA(\mathbb{T},\mathbb{R^+}).$
		\item[(H$_2^{'}$)] $\mathcal{H}(s,\tau,y(\tau))=\mathcal{J}(s,\tau,y(\tau))+\mathcal{K}(s,\tau,y(\tau))\in bAAA(\mathbb{T}^+\times\mathbb{T}^+\times \mathbb{Y}, \mathbb{Y})$ be such that\\ $\lim\limits_{s\to\infty}\int\limits_{-\infty}^{s}{\mathcal{K}(s,\tau,y(\tau))}\Delta \tau=0$ and $$\|\mathcal{H}(t,s,y_1)-\mathcal{H}(t,s,y_2)\|\leq L_{\mathcal{H}}(s)\|y_1-y_2\|~\forall~ t,s\in\mathbb{T},~y_i\in Y,$$ $\text{where}~L_{\mathcal{H}}\in AA(\mathbb{T},\mathbb{R^+})$ such that $L_{\mathcal{H}}^{1}(t)=\int\limits_{-\infty}^{s}{L_{\mathcal{H}}(t)}\Delta t<\infty,$ for any $s\in\mathbb{T}.$
		\item[(H$_3^{'}$)] $A$ is the generator of an exponentially stable $C_0$-semigroup, $\left\{T(s):s\in\mathbb{T}_0^+\right\}.$
		\item[(H$_4^{'}$)] There exists $r>0$ such that $\frac{\alpha r}{M}-r(1+\Tilde{\mu}\alpha)\left(L_{\mathcal{F}}^*+{L_{\mathcal{H}}^{1}}^*\right)>(1+\Tilde{\mu}\alpha)M_F,$ where\\ $M_\mathcal{F}=\sup\big\{\|\mathcal{F}(s,0,z)\|_{\mathbb Y};~s\in \mathbb{T}^+,~z \in \mathbb Y \big\},\text{where,}~L_{\mathcal{F}}^*=\sup\limits_{t\in \mathbb{T}^+}{L_{\mathcal{F}}(t)},~ {L_{\mathcal{H}}^1}^*=\sup\limits_{t\in \mathbb{T}^+}{L_{\mathcal{H}^{1}}(s)}.$ 
	\end{itemize}
	We prove the existence and uniqueness of the solution of the IVP \eqref{eq-3.1}-\eqref{eq-3.2}. Let us first establish some results required for the main result.
	\begin{lemma}\label{lemma-3.2}
		Let $A$ be the infinitesimal generator of the $C_0$-semigroup $T=\left\{T(s):s\in\mathbb{T}_0^+\right\}.$ Also assume that $\mathcal{F}$ and $\mathcal{H}$ are functions in $C_{rd}\left(\mathbb{T}\times \mathbb{Y}\times \mathbb{Y},\mathbb{Y}\right)$ and $C_{rd}(\mathbb{T}\times\mathbb{T}\times \mathbb{Y}, \mathbb{Y}),$ respectively. Then $y\in BC_{rd}(\mathbb {S},\mathbb{Y})$ is a mild solution of \eqref{eq-3.1}-\eqref{eq-3.2} if and only if $y$ satisfies the following improper $\Delta-$integral
		\begin{equation} \label{eq-3.8}
			y(s)=\int\limits_{-\infty}^{s}{T(s-\sigma(t)}\mathcal{F}\left(t,y(t),\int\limits_{-\infty}^{t}{\mathcal{H}(t,\tau,y(\tau))\Delta \tau}\right)\Delta t.
		\end{equation}
	\end{lemma}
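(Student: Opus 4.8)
The plan is to establish the equivalence by passing between \eqref{eq-3.8} and the finite-endpoint mild-solution identity of Definition \ref{defn-2.15}, using the exponential stability \eqref{eq-3.4} to control the base point as it recedes to $-\infty$. First I would fix the (bounded, rd-continuous) forcing term
\[
\Phi(t):=\mathcal{F}\left(t,y(t),\int\limits_{-\infty}^{t}\mathcal{H}(t,\tau,y(\tau))\Delta\tau\right)
\]
and verify that the improper integrals make sense: the inner history integral converges because $\|\mathcal{H}(t,\tau,y(\tau))-\mathcal{H}(t,\tau,0)\|\le L_{\mathcal{H}}(\tau)\|y(\tau)\|$ together with $L_{\mathcal{H}}^{1}<\infty$ from (H$_2^{'}$) (and $y$ bounded), while boundedness of $\Phi$ then follows from (H$_1^{'}$) and $M_{\mathcal{F}}<\infty$. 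Since $\|T(s-\sigma(t))\Phi(t)\|\le M\,e_{\ominus\alpha}(s,\sigma(t))\,\|\Phi\|$ by \eqref{eq-3.4}, Theorem \ref{thm-2.2} yields absolute convergence of $\int\limits_{-\infty}^{s}T(s-\sigma(t))\Phi(t)\Delta t$.

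For the direction assuming \eqref{eq-3.8}, I would fix any $s_0\in\mathbb{T}$ with $s_0\le s$ and split the integral at $s_0$. On the tail $(-\infty,s_0]$ the semigroup property of Definition \ref{def-2.9} lets me write $T(s-\sigma(t))=T(s-s_0)T(s_0-\sigma(t))$ (legitimate since $\mathbb{T}$ is a semigroup time scale and $\sigma(t)\le s_0\le s$ keep all arguments in $\mathbb{T}_0^{+}$), so that
\[
\int\limits_{-\infty}^{s_0}T(s-\sigma(t))\Phi(t)\Delta t=T(s-s_0)\int\limits_{-\infty}^{s_0}T(s_0-\sigma(t))\Phi(t)\Delta t=T(s-s_0)\,y(s_0).
\]
Adding back the piece over $[s_0,s]$ recovers exactly the identity of Definition \ref{defn-2.15} with initial value $y_0=y(s_0)$, showing that $y$ is a mild solution.

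Conversely, if $y\in BC_{rd}(\mathbb{S},\mathbb{Y})$ is a bounded mild solution, then (using that such a solution satisfies the variation-of-constants formula from any base point, a direct consequence of the semigroup evolution property) it admits the representation of Definition \ref{defn-2.15} from an arbitrary lower endpoint $s_0$, namely $y(s)=T(s-s_0)y(s_0)+\int\limits_{s_0}^{s}T(s-\sigma(t))\Phi(t)\Delta t$. I would then estimate the boundary term by $\|T(s-s_0)y(s_0)\|\le M\,e_{\ominus\alpha}(s,s_0)\,\|y\|$ and let $s_0\to-\infty$; Lemma \ref{lemma-2.1} forces this term to $0$, while the convergence established above lets the integral tend to $\int\limits_{-\infty}^{s}T(s-\sigma(t))\Phi(t)\Delta t$, yielding \eqref{eq-3.8}.

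The hard part will be making the limit passages rigorous on the time scale: both the existence of the improper $\Delta$-integrals and the interchange of limit with integration as $s_0\to-\infty$ hinge on the dominating decay $e_{\ominus\alpha}(s,\sigma(t))$, and the tail decomposition relies on the semigroup structure of $\mathbb{T}$ so that $s-s_0$ and $s_0-\sigma(t)$ never leave $\mathbb{T}_0^{+}$. The convergence of the inner history integral $\int\limits_{-\infty}^{t}\mathcal{H}(t,\tau,y(\tau))\Delta\tau$, guaranteed by $L_{\mathcal{H}}^{1}<\infty$ in (H$_2^{'}$), is the other ingredient that must be checked carefully before the outer manipulations are justified.
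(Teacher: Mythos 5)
Your proposal is correct and follows essentially the same route as the paper's own proof: the forward direction lets the base point $s_0\to-\infty$ and uses exponential stability to annihilate the term $T(s-s_0)y(s_0)$ (with boundedness of $y$), the converse splits the improper integral at $s_0$ and applies the semigroup property $T(s-\sigma(t))=T(s-s_0)T(s_0-\sigma(t))$, and the convergence of the improper $\Delta$-integral is justified by the same Lipschitz-plus-$M_{\mathcal{F}}$ estimates (the paper organizes this via a decomposition $\mathcal{F}=F_1+F_2$, you via boundedness of the forcing term, which is the same computation). No substantive difference in method or in the hypotheses invoked.
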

	\begin{proof}
		If $y$ is mild solution of \eqref{eq-3.1}-\eqref{eq-3.2} then by Definition \ref{defn-2.15} we have 
		\begin{equation}\label{eq-3.9}
			y(s)=T(s-s_o)y_0+\int\limits_{s_0}^{s}{T(s-\sigma(t))\mathcal{F}\left(t,y(t),\int\limits_{s_0}^{t}{\mathcal{H}(t,\tau,y(\tau))}\Delta\tau\right)}\Delta t.
		\end{equation}
		Since $T$ is exponentially stable, so  we get 
		\begin{equation}\label{eq-3.10}
			\|T(s-s_0)y_0\|=Me_{\ominus \alpha}(s,s_0)\|y_0\|.
		\end{equation}
		Again, since $y_0=y(s_0)$ and $y\in BC_{rd}(\mathbb{T},\mathbb{Y}),$ there exists $m>0$ such that $\|y\|_{\mathbb{Y}}\leq m$ and hence from \eqref{eq-3.10}, we get
		\begin{equation}
			\|T(s-s_0)y_0\|=Mme_{\ominus \alpha}(s,s_0),~s\geq 0.
		\end{equation}
		Taking $\lim s_0\to-\infty,$ we can see from \eqref{eq-3.10} that
		\begin{equation}
			\lim\limits_{s_0\to-\infty}{\|T(s-s_0)y_0\|}=0.
		\end{equation}
		Now taking $\lim {s_0 \to-\infty}$ in equation \eqref{eq-3.9}, we obtain 
		\begin{equation}\label{eq-3.13}
			y(s)= \int\limits_{-\infty}^{s}{T(s-\sigma(t))\mathcal{F}\left(t,y(t),\int\limits_{-\infty}^{t}{\mathcal{H}(t,\tau,y(\tau))}\Delta\tau\right)}\Delta t.
		\end{equation}
		Now we check for convergence of 
		\begin{equation}\label{eq-3.14}
			\int\limits_{-\infty}^{s}{T(s-\sigma(t))\mathcal{F}\left(t,y(t),\int\limits_{-\infty}^{t}{\mathcal{H}(t,\tau,y(\tau))}\Delta\tau\right)}\Delta t.
		\end{equation}
		Let us consider the following 
		\begin{eqnarray*}
			F_1&=&\mathcal{F}\left(t,0,\int\limits_{s_0}^{t}{\mathcal{H}(t,\tau,0)}\Delta\tau\right)\\
			F_2&=& \mathcal{F}\left(t,y(t),\int\limits_{s_0}^{t}{\mathcal{H}(t,\tau,y(\tau))}\Delta\tau\right)\notag\\
			&\qquad-&\mathcal{F}\left(t,0,\int\limits_{s_0}^{t}{\mathcal{H}(t,\tau,0)}\Delta\tau\right)
		\end{eqnarray*} 
		such that $\mathcal{F}=F_1+F_2.$\\
		
		Now \begin{eqnarray}
			&&\Bigg\|\int\limits_{-\infty}^{s}{T(s-\sigma(t))\mathcal{F}\left(t,y(t),\int\limits_{-\infty}^{t}{\mathcal{H}(t,\tau,y(\tau))}\Delta\tau\right)}\Delta t\Bigg\|\notag\\
                &=&\int\limits_{-\infty}^{s}{\|T(s-\sigma(t))(F_1+F_2)\|}\Delta t\notag\\
			&\leq& M\int\limits_{-\infty}^{s}{e_{\ominus \alpha}(s,\sigma(t))\|F_1+F_2\|}\Delta t\notag\\
			&\leq& M\left\{\int\limits_{-\infty}^{s}{e_{\ominus \alpha}(s,\sigma(t))\|F_1\|}\Delta t+\int\limits_{-\infty}^{s}{e_{\ominus \alpha}(s,\sigma(t))\|F_2\|}\Delta t\right\}\notag\\
			&\leq& M\left\{M_{\mathcal{F}}\int\limits_{-\infty}^{s}\frac{1+\mu(t)\alpha}{\alpha}\left(-(\ominus
			\alpha)e_{\ominus}(s,\sigma(t))\right)\Delta t\right\}\label{eq-3.15}.
		\end{eqnarray}
		We have \begin{eqnarray}
			\int\limits_{-\infty}^{s}{e_{\ominus \alpha}(s,\sigma(t))\|F_1\|}\Delta t &=&\int\limits_{-\infty}^{s}\frac{1+\mu(t)\alpha}{\alpha}\left(-(\ominus
			\alpha)e_{\ominus}(s,\sigma(t))\right)\|F_1\|\Delta t \notag\\
			&\leq&\frac{ M_{\mathcal{F}}(1+\Tilde{\mu}\alpha)}{\alpha}\int\limits_{-\infty}^{s}\left(-(\ominus\alpha)e_{\ominus}(s,\sigma(t))\right)\Delta t \notag\\
			&\leq&\frac{ M_{\mathcal{F}}(1+\Tilde{\mu}\alpha)}{\alpha}\left(e_{\ominus}(s,s)-e_{\ominus}(s,-\infty)\right)\Delta t \notag\\
			&=&\frac{ M_{\mathcal{F}}(1+\Tilde{\mu}\alpha)}{\alpha}\label{eq-3.16}.
		\end{eqnarray}
		Also 
		\begin{eqnarray}
			\int\limits_{-\infty}^{s}{e_{\ominus \alpha}(s,\sigma(t))\|F_2\|}\Delta t &=&  \int\limits_{-\infty}^{s} {e_{\ominus}(s,\sigma(t))}\Bigg\| \mathcal{F}\left(t,y(t),\int\limits_{0}^{t}{\mathcal{H}(t,\tau,y(\tau))}\Delta\tau\right)\notag\\
			&\qquad-&\mathcal{F}\left(t,0,\int\limits_{0}^{t}{\mathcal{H}(t,\tau,0)}\Delta\tau\right)\Bigg\|\Delta t\notag\\
			&\leq& \int\limits_{-\infty}^{s} {e_{\ominus}(s,\sigma(t))}\left(L_{\mathcal{F}}\left(\|y(t)\|+ \int\limits_{-\infty}^{s}{L_{\mathcal{H}}(\tau)\|y(\tau)\|}\right)\right)\notag\\
			&\leq& mL_{\mathcal{F}}\left(1+L_{\mathcal{H}}^{'} \right)\int\limits_{-\infty}^{s} {e_{\ominus}(s,\sigma(t))}\notag\\
			&=& mL_{\mathcal{F}}\left(1+L_{\mathcal{H}}^{'}
			\right)\frac{(1+\Tilde{\mu}\alpha)}{\alpha}\label{eq-3.17}.
		\end{eqnarray}
		Using results given by equation \eqref{eq-3.16} and equation \eqref{eq-3.17}, from \eqref{eq-3.15}, we get 
		\begin{eqnarray*}
			\Bigg\|\int\limits_{-\infty}^{s}{T(s-\sigma(t))\mathcal{F}\left(t,y(t),\int\limits_{-\infty}^{t}{\mathcal{H}(t,\tau,y(\tau))}\Delta\tau\right)}\Delta t\Bigg\|
			&\leq& M\frac{(1+\Tilde{\mu}\alpha)}{\alpha}(M_{\mathcal{F}}+ mL_{\mathcal{F}}(1+L_{\mathcal{H}}^{'})
		\end{eqnarray*} 
		which shows that $\Bigg\|\int\limits_{-\infty}^{s}{T(s-\sigma(t))\mathcal{F}\left(t,y(t),\int\limits_{-\infty}^{t}{\mathcal{H}(t,\tau,y(\tau))}\Delta\tau\right)}\Delta t\Bigg\|$ is convergent.\\
		Now $$y_0=y(s_0)=\Bigg\|\int\limits_{-\infty}^{s_0}{T(s_0-\sigma(t))\mathcal{F}\left(t,y(t),\int\limits_{-\infty}^{t}{\mathcal{H}(t,\tau,y(\tau))}\Delta\tau\right)}\Delta t$$
		and \begin{eqnarray*}
			y(s)&=&\int\limits_{-\infty}^{s}{T(s-\sigma(t))\mathcal{F}\left(t,y(t),\int\limits_{-\infty}^{t}{\mathcal{H}(t,\tau,y(\tau))}\Delta\tau\right)}\Delta t\\
			&=&\int\limits_{-\infty}^{0}{T(s-\sigma(t))\mathcal{F}\left(t,y(t),\int\limits_{-\infty}^{t}{\mathcal{H}(t,\tau,y(\tau))}\Delta\tau\right)}\Delta t\\
			&+& \int\limits_{0}^{s}{T(s-\sigma(t))\mathcal{F}\left(t,y(t),\int\limits_{-\infty}^{t}{\mathcal{H}(t,\tau,y(\tau))}\Delta\tau\right)}\Delta t\\
			&=&T(s-s_0)\int\limits_{-\infty}^{s_0}{T(s_0-\sigma(t))\mathcal{F}\left(t,y(t),\int\limits_{-\infty}^{t}{\mathcal{H}(t,\tau,y(\tau))}\Delta\tau\right)}\Delta t\\
			&+&\int\limits_{s_0}^{s}{T(s-\sigma(t))\mathcal{F}\left(t,y(t),\int\limits_{-\infty}^{t}{\mathcal{H}(t,\tau,y(\tau))}\Delta\tau\right)}\Delta t\\
			&=&T(s-s_0)y(s_0)+\int\limits_{0}^{s}{T(s-\sigma(t))\mathcal{F}\left(t,y(t),\int\limits_{-\infty}^{t}{\mathcal{H}(t,\tau,y(\tau))}\Delta\tau\right)}\Delta t.
		\end{eqnarray*}
		By the above discussion, we can confirm that $y$ given by \eqref{eq-3.8} is in fact a mild solution to the initial value problem given by \eqref{eq-3.1}-\eqref{eq-3.2}.
	\end{proof}
	\begin{prop}\label{prop-1}
		If $f\in AA(\mathbb{T},\mathbb{Y}),$ then the range set $R_f=\left\{f(s):s\in\mathbb{T}\right\}$ is relatively compact.
	\end{prop}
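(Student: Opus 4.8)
The plan is to establish relative compactness of $R_f$ through sequential compactness: since $\mathbb{Y}$ is a Banach space, and hence a metric space, it suffices to show that every sequence drawn from $R_f$ admits a subsequence converging to a point of $\mathbb{Y}$. I would therefore start with an arbitrary sequence $(x_n)\subset R_f$ and write $x_n=f(s_n)$ for suitable points $s_n\in\mathbb{T}$.

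The key preliminary observation is that the points $s_n$ may legitimately be regarded as elements of the translation set $\Pi$. Recall that throughout this section $\mathbb{T}$ is a symmetric time scale having the semigroup property and containing zero. By the Remark following the definition of invariance under translation, such a $\mathbb{T}$ is invariant under translation; moreover, symmetry together with the semigroup property forces $\mathbb{T}$ to be closed under both addition and subtraction, so that every element of $\mathbb{T}$ lies in $\Pi$. Combined with the inclusion $\Pi\subset\mathbb{T}$, which holds because $0\in\mathbb{T}$ by the Lemma of \cite{Guerekata 1}, this yields the equality $\Pi=\mathbb{T}$. Consequently $(s_n)$ is a bona fide sequence on $\Pi$, and the defining property of almost automorphy becomes applicable to it.

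Next I would invoke that definition directly. Since $f\in AA(\mathbb{T},\mathbb{Y})$ and $(s_n)$ is a sequence on $\Pi$, there exists a subsequence $(\tau_n)\subset(s_n)$ such that $\overline{f}(s)=\lim_{n\to\infty}f(s+\tau_n)$ is well defined for every $s\in\mathbb{T}$. Evaluating this limit at the admissible point $s=0\in\mathbb{T}$ gives
\begin{equation*}
\lim_{n\to\infty}f(\tau_n)=\overline{f}(0)\in\mathbb{Y}.
\end{equation*}
As $(f(\tau_n))$ is a subsequence of $(x_n)$, this produces a convergent subsequence of the original sequence with limit in $\mathbb{Y}$. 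Since $(x_n)$ was arbitrary, every sequence in $R_f$ has a convergent subsequence, and therefore $R_f$ is relatively compact.

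The argument is short, and the only genuine subtlety, which I would flag as the main point to get right, is the passage from $s_n\in\mathbb{T}$ to $s_n\in\Pi$: one must justify that the defining property of almost automorphy can be applied to the chosen sequence. This rests entirely on the standing structural hypotheses on $\mathbb{T}$ (symmetry, the semigroup property, and $0\in\mathbb{T}$), so I would state the equality $\Pi=\mathbb{T}$ explicitly before extracting the subsequence, and I would also note that $0\in\mathbb{T}$ is precisely what allows the evaluation of $\overline{f}$ at $s=0$.
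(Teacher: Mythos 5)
Your proof is correct, but note that the paper itself gives no proof of Proposition \ref{prop-1}: it is stated (together with Proposition \ref{prop-2}) as a known fact imported from the time-scale literature on almost automorphic functions (cf.\ Lizama and Mesquita \cite{Lizama and Mesquita}), so there is no in-paper argument to compare yours against. What you wrote is essentially the standard argument from that literature: reduce relative compactness to the sequential criterion, use the defining property of $AA(\mathbb{T},\mathbb{Y})$ to extract a subsequence $(\tau_n)\subset(s_n)$ along which $f(s+\tau_n)$ converges for each $s$, and evaluate at $s=0$ to get $f(\tau_n)\to\overline{f}(0)\in\mathbb{Y}$. Your handling of the one genuine subtlety is also sound and worth making explicit, exactly as you do: the definition of almost automorphy applies only to sequences on $\Pi$, and under the paper's standing hypotheses ($\mathbb{T}$ symmetric, with the semigroup property, containing $0$) one gets $\mathbb{T}\subseteq\Pi$ (symmetry plus the semigroup property give closure under addition and subtraction) and $\Pi\subseteq\mathbb{T}$ (take $s=0$ in the definition of $\Pi$, or item (i) of the lemma quoted from \cite{Guerekata 1}), hence $\Pi=\mathbb{T}$; this is precisely what legitimizes applying the definition to $(s_n)$ and evaluating $\overline{f}$ at $0\in\mathbb{T}$. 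The closing step --- that a subset of a Banach space in which every sequence admits a subsequence convergent in the ambient space has compact closure --- is the standard sequential criterion in metric spaces, so the argument is complete as written.
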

	\begin{prop}\label{prop-2}
		If $f\in AAA(\mathbb{T}^+,\mathbb{Y}),$ then the range set $R_f=\left\{f(s):s\in\mathbb{T}^+\right\}$ is relatively compact.
	\end{prop}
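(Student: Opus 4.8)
The plan is to exploit the defining decomposition of an asymptotically almost automorphic function together with Proposition~\ref{prop-1}. Write $f = g + \phi$ with $g \in AA(\mathbb{T}^+,\mathbb{Y})$ and $\phi \in C_{rd_0}(\mathbb{T}^+,\mathbb{Y})$, so that $\|\phi(s)\|_{\mathbb Y} \to 0$ as $s \to \infty$. I would prove that $R_f$ is totally bounded; since $\mathbb{Y}$ is a Banach space, hence complete, total boundedness is equivalent to relative compactness. The intuition is that far out in time $f$ is uniformly close to its almost automorphic part $g$, whose range is already controlled by Proposition~\ref{prop-1}, while the remaining initial segment of times is compact.

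Fix $\varepsilon > 0$. First, because $\|\phi(s)\|_{\mathbb Y} \to 0$, I would choose $S_0 \in \mathbb{T}^+$ with $\|\phi(s)\|_{\mathbb Y} < \varepsilon/3$ for all $s > S_0$. By Proposition~\ref{prop-1} applied to $g$, the set $R_g = \{g(s): s\in\mathbb{T}^+\}$ is relatively compact, hence totally bounded, so it admits a finite $\varepsilon/3$-net $\{g(t_1),\dots,g(t_m)\}$. For any $s > S_0$ there is a suitable index $i$ with
\[
\|f(s) - g(t_i)\|_{\mathbb Y} \le \|\phi(s)\|_{\mathbb Y} + \|g(s) - g(t_i)\|_{\mathbb Y} < \tfrac{\varepsilon}{3} + \tfrac{\varepsilon}{3} < \varepsilon,
\]
so the tail $\{f(s): s > S_0\}$ is covered by finitely many $\varepsilon$-balls.

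It remains to treat the initial segment $[s_0,S_0]_{\mathbb T}$, which is closed and bounded, hence compact. The key point, and the step I expect to require the most care, is that the image of a compact time-scale interval under an rd-continuous map is relatively compact. This is not immediate from ordinary continuity, since rd-continuity only guarantees continuity at right-dense points and the existence of left-sided limits at left-dense points. I would establish it by a sequential argument: given $s_n \in [s_0,S_0]_{\mathbb T}$, compactness of the interval yields $s_{n_k}\to s^*$; if $s^*$ is approached from the right one invokes continuity at the (necessarily right-dense) point, whereas an approach from the left, which is the only possibility at a right-scattered point, is handled by the existence of the left limit. In either case $f(s_{n_k})$ converges along a further subsequence, so $\{f(s): s\in[s_0,S_0]_{\mathbb T}\}$ is relatively compact and admits a finite $\varepsilon$-net.

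Finally, the union of the two finite $\varepsilon$-nets is a finite $\varepsilon$-net for $R_f = \{f(s): s\le S_0\}\cup\{f(s): s>S_0\}$, and since $\varepsilon>0$ was arbitrary, $R_f$ is totally bounded, hence relatively compact. I note that the same conclusion follows directly from a sequential argument: an unbounded sequence of times is handled by Proposition~\ref{prop-1} for $g$ combined with $\phi\to 0$, and a bounded sequence of times by the compact-interval argument above. In both formulations the genuine obstacle is the relative compactness of the range of an rd-continuous function on a compact time-scale interval, everything else being a routine splitting.
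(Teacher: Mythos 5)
Your proof is sound, but there is nothing in the paper to compare it against: the paper states Proposition \ref{prop-2}, like Proposition \ref{prop-1}, without any proof, treating both as known facts imported from the literature on almost automorphic and asymptotically almost automorphic functions on time scales (cf. \cite{Lizama and Mesquita,Lizama and Mesquita 1}). Your argument therefore fills a gap the paper leaves open, and it does so correctly. The decomposition $f=g+\phi$, the $\varepsilon/3$-net for the tail obtained from Proposition \ref{prop-1}, and the reduction of relative compactness to total boundedness in the complete space $\mathbb{Y}$ are all handled properly. The step you rightly flag as delicate --- relative compactness of the image of a compact time-scale interval under an rd-continuous map --- is also argued correctly: a subsequence $s_{n_k}\to s^*$ approaching from the right forces $s^*$ to be right-dense, where rd-continuity gives genuine continuity, while a strictly left approach forces $s^*$ to be left-dense, where rd-continuity guarantees the left-sided limit exists; in that case $f(s_{n_k})$ converges to the left limit, which may differ from $f(s^*)$ and need not lie in $R_f$, but relative compactness only asks for convergence somewhere in $\mathbb{Y}$, so this is harmless. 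For comparison, the standard proof in the sources the paper relies on is the sequential dichotomy you sketch at the end: given $y_n=f(s_n)$, either $(s_n)$ has a bounded subsequence, which reduces to the compact-interval fact, or $s_n\to\infty$, in which case $\phi(s_n)\to 0$ and Proposition \ref{prop-1} applied to $g$ yields a convergent subsequence of $g(s_n)$, hence of $f(s_n)$. The two formulations are essentially equivalent; your $\varepsilon$-net version has the mild advantage of making explicit exactly where completeness of $\mathbb{Y}$ and the rd-continuity subtlety enter, whereas the sequential version is shorter and avoids invoking total boundedness altogether.
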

	\begin{prop}\label{prop-4}
		Let $F\in AA(\mathbb{T}\times\mathbb{Y}\times\mathbb{Y},\mathbb{Y})$ be such $$\|F(s,x_1,y_1)-F(s,x_2,y_2)\|\leq L_F(s)(\|x_1-x_2\|+\|y_1-y_2\|$$
		uniformly for $s\in\mathbb{T}~\text{and}~x_i,y_i\in \mathbb{Y},~i=1,2,$ where, $L_F\in AA(\mathbb{T},\mathbb{R^+}).$ 
		Then for any $x,y\in AA(\mathbb{T},\mathbb{Y}),$ the function $\Psi:\mathbb{T}\to\mathbb{Y},$ given by $\Psi(s)=F(\cdot,x,y)$ is almost automorphic.
		\begin{proof}
			Let $(\tau_n^{'} )_{n\in\mathbb{N}}$ be a sequence in $\Pi.$ since $x,y$ and $F$ are almost automorphic functions, we can get a subsequence $(\tau_n)_{n\in\mathbb{N}}$ of $(\tau_n^{'} )_{n\in\mathbb{N}}$ such that
			\begin{enumerate}
				\item $\lim\limits_{n\to\infty}{x(s+\tau_n)}=\Tilde{x}(s)$ exists for each $s\in\mathbb{T}.$\\
				\item $\lim\limits_{n\to\infty}{\Tilde{x}(s-\tau_n)}=x(s)$ exists for each $s\in\mathbb{T}.$\\
				\item $\lim\limits_{n\to\infty}{y(s+\tau_n)}=\Tilde{y}(s)$ exists for each $s\in\mathbb{T}.$\\
				\item $\lim\limits_{n\to\infty}{\Tilde{y}(s-\tau_n)}=y(s)$ exists for each $s\in\mathbb{T}.$\\
				\item $\lim\limits_{n\to\infty}{F(s+\tau_n,x,y)}=\Tilde{F}(s,x,y)$ exists for each $s\in\mathbb{T}.$\\
				\item $\lim\limits_{n\to\infty}{\Tilde{F}(s-\tau_n,x,y)}=F(s,x,y)$ exists for each $s\in\mathbb{T}.$
			\end{enumerate}
			Also since $L_F\in AA(\mathbb{T},\mathbb{R^+}),$ we have
			\begin{itemize}
				\item[*] $\lim\limits_{n\to\infty}{L_F(s+\tau_n)}=\Tilde{L_F}(s)$ exists for each $s\in\mathbb{T}$ 
				\item[**]   $\lim\limits_{n\to\infty}{\Tilde{L_F}(s-\tau_n)}=L_F(s)$ exists for each $s\in\mathbb{T}.$
			\end{itemize}
			Let  $\Tilde{\Psi}(s)=\Tilde{F}(\cdot,\Tilde{x}(s),\Tilde{y}(s)).$
			We have \begin{eqnarray}
				\|\Psi(s+\tau_n)-\Tilde{\Psi}(s)\|&=&\|F(s+\tau_n,x(s+\tau_n),y(s+\tau_n))-\Tilde{F}(\cdot,\Tilde{x}(s),\Tilde{y}(s))\|\notag\\
				&\leq&\|F(s+\tau_n,x(s+\tau_n),y(s+\tau_n))-{F}(s+\tau_n,\Tilde{x}(s),\Tilde{y}(s))\|\notag\\
				&+&\|{F}(s+\tau_n,\Tilde{x}(s),\Tilde{y}(s))-\Tilde{F}(\cdot,\Tilde{x}(s),\Tilde{y}(s))\|\label{eq-3.18}.
			\end{eqnarray}
			According to the given assumptions, we have 
			\begin{align*}
				&\|F(s+\tau_n,x(s+\tau_n),y(s+\tau_n))-{F}(s+\tau_n,\Tilde{x}(s),\Tilde{y}(s))\|\notag\\
				& \leq L_F(s+\tau_n)(\|x(s+\tau_n)-\Tilde{x}(s)\|+\|y(s+\tau_n)-\Tilde{x}(s)\|).
			\end{align*}
			So by $(1),(3)$ and $(*),$ we get
			\begin{equation}\label{eq-3.19}
				\lim\limits_{n\to\infty}\|F(s+\tau_n,x(s+\tau_n),y(s+\tau_n))-\Tilde{F}(s+\tau_n,\Tilde{x}(s),\Tilde{y}(s))\|=0.
			\end{equation}
			Also by $(5),$ we have 
			\begin{equation}\label{eq-3.20}
				\lim\limits_{n\to\infty}\|{F}(s+\tau_n,\Tilde{x}(s),\Tilde{y}(s))-\Tilde{F}(\cdot,\Tilde{x}(s),\Tilde{y}(s))\|=0.
			\end{equation}
			So by using equations \eqref{eq-3.19} and \eqref{eq-3.20}, we get from equation \eqref{eq-3.18}
			\begin{equation*}
				\lim\limits_{n\to\infty}{\Psi(s+\tau_n)}=\Tilde{\Psi}(s)~\text{for each}~s\in\mathbb{T}.
			\end{equation*}
			Using a similar argument as above we can also prove that 
			\begin{equation*}
				\lim\limits_{n\to\infty}{\Tilde{\Psi}(s-\tau_n)={\Psi}(s)}~\text{for each}~s\in\mathbb{T}.
			\end{equation*}
			This proves that $\Psi\in AA(\mathbb{T},\mathbb{Y}).$
		\end{proof}
	\end{prop}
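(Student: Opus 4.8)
The plan is to verify directly the two defining limits of almost automorphy for $\Psi(s)=F(s,x(s),y(s))$, drawing on the almost automorphy of each ingredient $x$, $y$, $F$, and $L_F$ together with the Lipschitz hypothesis. First I would fix an arbitrary sequence $(s_n')$ in $\Pi$. Applying the definition of almost automorphy first to $x$, then passing to further subsequences successively for $y$, for $F$, and for $L_F$ (a diagonal-type extraction), I obtain a single subsequence $(\tau_n)\subset(s_n')$ along which, for every $s\in\mathbb{T}$ and $u,v\in\mathbb{Y}$, the forward limits $x(s+\tau_n)\to\tilde{x}(s)$, $y(s+\tau_n)\to\tilde{y}(s)$, $F(s+\tau_n,u,v)\to\tilde{F}(s,u,v)$, and $L_F(s+\tau_n)\to\tilde{L_F}(s)$ all hold simultaneously, together with their reverse counterparts $\tilde{x}(s-\tau_n)\to x(s)$ and so on. The natural candidate for the limit function is then $\tilde{\Psi}(s):=\tilde{F}(s,\tilde{x}(s),\tilde{y}(s))$.

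The core of the argument is the triangle-inequality split
\begin{align*}
\|\Psi(s+\tau_n)-\tilde{\Psi}(s)\| &\le \|F(s+\tau_n,x(s+\tau_n),y(s+\tau_n))-F(s+\tau_n,\tilde{x}(s),\tilde{y}(s))\|\\
&\quad +\|F(s+\tau_n,\tilde{x}(s),\tilde{y}(s))-\tilde{F}(s,\tilde{x}(s),\tilde{y}(s))\|.
\end{align*}
For the first term I would invoke the Lipschitz hypothesis to bound it by $L_F(s+\tau_n)\big(\|x(s+\tau_n)-\tilde{x}(s)\|+\|y(s+\tau_n)-\tilde{y}(s)\|\big)$; since $L_F\in AA(\mathbb{T},\mathbb{R}^{+})$ has relatively compact range by Proposition \ref{prop-1} and is therefore bounded, while the two displacements tend to $0$, this term vanishes as $n\to\infty$. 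The second term tends to $0$ by the almost automorphy of $F$ evaluated at the fixed pair $(\tilde{x}(s),\tilde{y}(s))$. Combining these bounds gives $\Psi(s+\tau_n)\to\tilde{\Psi}(s)$ for each $s\in\mathbb{T}$.

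The reverse limit $\tilde{\Psi}(s-\tau_n)\to\Psi(s)$ follows by the same split with the roles interchanged, now using the backward limits of $x$, $y$, $F$, and $L_F$; I would also record that $\Psi$ is rd-continuous, since $F$ is continuous in its two state variables and rd-continuous in $s$ while $x,y$ are rd-continuous, so that $\Psi$ genuinely belongs to $AA(\mathbb{T},\mathbb{Y})$. I expect the main obstacle to be twofold: the clean simultaneous extraction of one subsequence serving all four almost automorphic data, which is routine but must be set up with care; and justifying that the first term in the split vanishes, which hinges on the boundedness of the Lipschitz coefficient $L_F$. That boundedness is exactly what the relative compactness of the range in Proposition \ref{prop-1} supplies, and once it is secured the remaining steps are elementary limit estimates.
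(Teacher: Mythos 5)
Your proposal is correct and follows essentially the same route as the paper's proof: the same extraction of a common subsequence for $x$, $y$, $F$, $L_F$, the same candidate limit $\tilde{\Psi}(s)=\tilde{F}(s,\tilde{x}(s),\tilde{y}(s))$, and the same two-term triangle-inequality split handled by the Lipschitz bound and the almost automorphy of $F$. The only cosmetic difference is that you justify the boundedness of $L_F(s+\tau_n)$ via the relative compactness of the range of $L_F$ (Proposition \ref{prop-1}), whereas the paper gets it directly from the pointwise convergence $L_F(s+\tau_n)\to\tilde{L_F}(s)$; both are valid, and your additional remark on the rd-continuity of $\Psi$ is a welcome detail the paper leaves implicit.
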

	\begin{prop}\label{prop-5}
		If $\mathcal{F}\in AAA(\mathbb{T}^+\times\mathbb{Y}\times\mathbb{Y},\mathbb{Y})$ and satisfies $(H_2^{'})$, then for $x,y\in AAA(\mathbb{T}^+,\mathbb{Y}),$ the function $\Gamma:\mathbb{T}^+\to\mathbb{Y},$ given by $\Gamma(s)=\mathcal{F}(s,x(s),y(s))$ is also asymptotically almost automorphic.
	\end{prop}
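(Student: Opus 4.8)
The plan is to use the unique decompositions guaranteed by the hypotheses together with the composition result of Proposition \ref{prop-4}. Write $\mathcal{F}=\mathcal{G}+\mathcal{I}$ with $\mathcal{G}\in AA(\mathbb{T}^+\times\mathbb{Y}\times\mathbb{Y},\mathbb{Y})$ and $\mathcal{I}\in C_{rd_0}(\mathbb{T}^+\times\mathbb{Y}\times\mathbb{Y},\mathbb{Y})$, and likewise $x=x_1+x_2$, $y=y_1+y_2$ with $x_1,y_1\in AA(\mathbb{T}^+,\mathbb{Y})$ and $x_2,y_2\in C_{rd_0}(\mathbb{T}^+,\mathbb{Y})$. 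I would take as principal term $\Gamma_1(s):=\mathcal{G}(s,x_1(s),y_1(s))$ and as corrective term $\Gamma_2:=\Gamma-\Gamma_1$, then prove $\Gamma_1\in AA(\mathbb{T}^+,\mathbb{Y})$ and $\Gamma_2\in C_{rd_0}(\mathbb{T}^+,\mathbb{Y})$; uniqueness of this decomposition then gives $\Gamma\in AAA(\mathbb{T}^+,\mathbb{Y})$.

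For the principal term, I would first check that $\mathcal{G}$ inherits the Lipschitz estimate imposed on $\mathcal{F}$ in the hypotheses. For a sequence $\alpha_n\to\infty$ in $\Pi$ the corrective part satisfies $\mathcal{I}(s+\alpha_n,u,v)\to 0$, so along a suitable subsequence $\mathcal{F}(s+\alpha_n,u,v)\to\widetilde{\mathcal{G}}(s,u,v)$; passing to the limit in the Lipschitz inequality, using $L_{\mathcal{F}}(s+\alpha_n)\to\widetilde{L_{\mathcal{F}}}(s)$, shows $\widetilde{\mathcal{G}}$ is Lipschitz, and reversing the translation transfers this back to $\mathcal{G}$ with the same almost automorphic coefficient $L_{\mathcal{F}}$. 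Then $\mathcal{G}$ meets the hypotheses of Proposition \ref{prop-4}, and since $x_1,y_1\in AA(\mathbb{T}^+,\mathbb{Y})$, that proposition delivers $\Gamma_1\in AA(\mathbb{T}^+,\mathbb{Y})$.

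For the corrective term I would split
\begin{equation*}
\Gamma_2(s)=\bigl[\mathcal{G}(s,x(s),y(s))-\mathcal{G}(s,x_1(s),y_1(s))\bigr]+\mathcal{I}(s,x(s),y(s)).
\end{equation*}
The bracket is estimated by the Lipschitz bound just obtained,
\begin{equation*}
\bigl\|\mathcal{G}(s,x(s),y(s))-\mathcal{G}(s,x_1(s),y_1(s))\bigr\|\le L_{\mathcal{F}}(s)\bigl(\|x_2(s)\|+\|y_2(s)\|\bigr),
\end{equation*}
and since $L_{\mathcal{F}}$ is bounded (being almost automorphic) while $x_2,y_2\in C_{rd_0}$, this tends to $0$ as $s\to\infty$. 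For $\mathcal{I}(s,x(s),y(s))$ I would invoke Proposition \ref{prop-2} to conclude that the ranges of $x$ and $y$ are relatively compact, so the moving arguments stay in a fixed compact set $K\subset\mathbb{Y}$; combining the pointwise decay of $\mathcal{I}$ with its uniform continuity on $K$ through a finite $\varepsilon$-net upgrades the decay to uniform over $K$, whence $\mathcal{I}(s,x(s),y(s))\to 0$. Therefore $\Gamma_2\in C_{rd_0}(\mathbb{T}^+,\mathbb{Y})$, and the decomposition $\Gamma=\Gamma_1+\Gamma_2$ is complete.

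I expect the term $\mathcal{I}(s,x(s),y(s))$ to be the main obstacle: the definition of $C_{rd_0}$ provides only decay of $\mathcal{I}(s,u,v)$ that is pointwise in $(u,v)$, whereas here the arguments $x(s),y(s)$ move with $s$. The relative compactness from Proposition \ref{prop-2}, paired with the $\varepsilon$-net/equicontinuity argument, is exactly what converts pointwise decay into the uniform decay needed to finish. The inheritance of the Lipschitz property by $\mathcal{G}$ is routine but should be stated explicitly, since the Lipschitz hypothesis is assumed for $\mathcal{F}$ rather than for its principal part $\mathcal{G}$.
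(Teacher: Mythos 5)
Your proposal is correct, but it takes a genuinely different (and more careful) route than the paper's. The paper does not decompose $\mathcal{F}$ at all: writing $x=u+w$, $y=v+z$ with $u,v\in AA$ and $w,z\in C_{rd_0}$, it splits $\Gamma(s)=\mathcal{F}(s,u(s),v(s))+\bigl[\mathcal{F}(s,x(s),y(s))-\mathcal{F}(s,u(s),v(s))\bigr]$, kills the bracket in one line with the Lipschitz condition, $\|\mathcal{F}(s,x(s),y(s))-\mathcal{F}(s,u(s),v(s))\|\le L_{\mathcal{F}}(s)\bigl(\|w(s)\|+\|z(s)\|\bigr)\to 0$, and declares the first term almost automorphic by citing Proposition \ref{prop-4}. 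That last step is a shortcut: Proposition \ref{prop-4} is stated for $F\in AA(\mathbb{T}\times\mathbb{Y}\times\mathbb{Y},\mathbb{Y})$, whereas $\mathcal{F}$ is only asymptotically almost automorphic, so $\mathcal{F}(s,u(s),v(s))$ still carries the term $\mathcal{I}(s,u(s),v(s))$, whose decay along moving arguments is exactly the obstacle you single out; your principal term $\mathcal{G}(s,x_1(s),y_1(s))$, the Lipschitz inheritance for $\mathcal{G}$ via translation limits, and the compactness/$\varepsilon$-net upgrade of the pointwise decay of $\mathcal{I}$ are precisely what is needed to make that step airtight, at the cost of a longer argument. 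Two small remarks: first, the $\varepsilon$-net step needs continuity of $\mathcal{I}(s,\cdot,\cdot)$ that is uniform in $s$; this does follow from your own Lipschitz inheritance (since $\mathcal{I}=\mathcal{F}-\mathcal{G}$ is then equi-Lipschitz with constant $2\sup_{s}L_{\mathcal{F}}(s)$), but it should be stated. Second, uniqueness of the decomposition is not needed to conclude $\Gamma\in AAA(\mathbb{T}^+,\mathbb{Y})$; exhibiting one admissible decomposition suffices. Finally, the hypothesis actually used (by you and by the paper) is the Lipschitz condition of $(H_1^{'})$, not $(H_2^{'})$; the label in the statement is a typo that the paper's proof repeats.
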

	\begin{proof}
		By $(H_2^{'})$ $\mathcal{F}\in AAA(\mathbb{T}^+\times\mathbb{Y}\times\mathbb{Y},\mathbb{Y}),$ and  $$\mathcal{F}(s,x(s),y(s))=\mathcal{G}(s,x(s),y(s)+\mathcal{I}(s,x(s),y(s)),$$ where $\mathcal{G}\in AA(\mathbb{T}\times\mathbb{Y}\times\mathbb{Y},\mathbb{Y})$ and $\mathcal{I}\in C_{rd_0}(\mathbb{T}\times\mathbb{Y}\times\mathbb{Y},\mathbb{Y}).$ Again for $x,y\in AAA(\mathbb{T}^+,\mathbb{Y}),$ we have $x(s)=u(s)+w(s)$ and $y(s)=v(s)+z(s),$ where $u,v\in AA(\mathbb{T},\mathbb{Y})$ and $w,z\in C_{rd_0}(\mathbb{T},\mathbb{Y}).$\\
		Now, \begin{eqnarray*}
			\mathcal{F}(s,x(s),y(s))&=&\mathcal{F}(s,u(s),v(s))+[\mathcal{F}(s,x(s),y(s))-\mathcal{F}(s,u(s),v(s))].
		\end{eqnarray*}
		Let $F^{'}(s)=\mathcal{F}(s,x(s),y(s))-\mathcal{F}(s,u(s),v(s)),$ $F^{''}(s)=\mathcal{F}(s,u(s),v(s)).$ \\
		Clearly by \textbf{Proposition \ref{prop-4}}, for $u,v\in AA(\mathbb{T},\mathbb{Y})$ $\mathbb{F}^{''}\in AA(\mathbb{T},\mathbb{Y}).$ 
		
		Again, for $w(s)\in C_{rd_0}(\mathbb{T}^+,\mathbb{Y}),$  
		we have $\lim\limits_{s\to\infty}\|x(s)-u(s)\|=\lim\limits_{s\to\infty}\|w(s)\|=0.$\\
		Similarly we get   $\lim\limits_{s\to\infty}\|y(s)-v(s)\|=\lim\limits_{s\to\infty}\|z(s)\|=0.$\\
		Now using $(H_2^{'}),$ we get
		\begin{eqnarray*}
			\|F^{'}(s)\| &=&\|\mathcal{F}(s,x(s),y(s))-\mathcal{F}(s,u(s),v(s))\|\leq L_\mathcal{F}(s)(\|x(s)-u(s)\|+\|y(s)-v(s)\|)\\
			\Rightarrow \lim\limits_{s\to\infty}\|\mathcal{F}^{'}(s)\|&=&\lim\limits_{s\to\infty}L_F(s)(\|x(s)-u(s)\|+\|y(s)-v(s)\|)\\
			&=&\lim\limits_{s\to\infty}(L_F(s))(\lim\limits_{s\to\infty}\|x(s)-u(s)\|+\lim\limits_{s\to\infty}\|y(s)-v(s)\|)\\
			&=&0~(\text{as $L_\mathcal{F}\in AA(\mathbb{T},\mathbb{R}^+)\subset BC_{rd}(\mathbb{T},\mathbb{R}^+)$}).
		\end{eqnarray*}
		Hence $F^{'}\in AA(\mathbb{T},\mathbb{Y}).$
		So from the above discussion, we can confirm that $\Gamma\in AAA(\mathbb{T}^+,\mathbb{Y}).$
	\end{proof}
	
	\begin{prop}\label{Prop-3.6}
		If $\mathcal{J}\in bAA(\mathbb{T}\times\mathbb{T}\times\mathbb{Y},\mathbb{Y})$ be a function as mentioned in  $(H_2^{'})$ then for any $y\in AA(\mathbb{T},\mathbb{Y})$ the function $$\Phi(s):=\int\limits_{-\infty}^{s}{\mathcal{J}(s,\tau,y(\tau))}\Delta \tau$$ is almost automorphic.
	\end{prop}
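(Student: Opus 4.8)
The plan is to verify the definition of almost automorphy for $\Phi$ directly, by extracting a single subsequence that simultaneously realizes the bi-almost automorphic limits of $\mathcal{J}$ and the almost automorphic limits of $y$. Fix an arbitrary sequence $(s_n')$ in $\Pi$. Since $\mathcal{J}\in bAA(\mathbb{T}\times\mathbb{T}\times\mathbb{Y},\mathbb{Y})$ and $y\in AA(\mathbb{T},\mathbb{Y})$, I would pass to a common subsequence $(\tau_n)\subset(s_n')$ along which $\tilde{\mathcal{J}}(s,t,w)=\lim_{n\to\infty}\mathcal{J}(s+\tau_n,t+\tau_n,w)$ (the convergence being uniform in $w$, by the definition of bi-almost automorphicity) and $\tilde{y}(\xi)=\lim_{n\to\infty}y(\xi+\tau_n)$ exist for all $s,t,\xi\in\mathbb{T}$, together with the corresponding reverse limits. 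I then propose $\tilde\Phi(s):=\int_{-\infty}^{s}\tilde{\mathcal{J}}(s,\xi,\tilde{y}(\xi))\,\Delta\xi$ as the candidate limit function.

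The core computation exploits that $\mathbb{T}$ is invariant under translation, so the substitution $\tau=\xi+\tau_n$ yields $\Phi(s+\tau_n)=\int_{-\infty}^{s}\mathcal{J}(s+\tau_n,\xi+\tau_n,y(\xi+\tau_n))\,\Delta\xi$. Subtracting $\tilde\Phi(s)$ and inserting $\pm\,\mathcal{J}(s+\tau_n,\xi+\tau_n,\tilde{y}(\xi))$ inside the integral splits the difference into a \emph{Lipschitz} term, bounded through the estimate of $(H_2')$ (applied to the component $\mathcal{J}$) by $\int_{-\infty}^{s}L_{\mathcal{H}}(\xi+\tau_n)\,\|y(\xi+\tau_n)-\tilde{y}(\xi)\|\,\Delta\xi$, and a \emph{bi-almost automorphic} term $\int_{-\infty}^{s}\|\mathcal{J}(s+\tau_n,\xi+\tau_n,\tilde{y}(\xi))-\tilde{\mathcal{J}}(s,\xi,\tilde{y}(\xi))\|\,\Delta\xi$. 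In both integrals the integrand tends to $0$ pointwise in $\xi$: in the first because $y(\xi+\tau_n)\to\tilde{y}(\xi)$, and in the second because the bi-almost automorphic convergence of $\mathcal{J}$ is uniform in its third slot. It then remains only to pass the limit under the integral sign.

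The \textbf{main obstacle}, and the step deserving the greatest care, is exactly this interchange of the limit with the improper $\Delta$-integral over $(-\infty,s]$, since pointwise convergence of the integrands alone is insufficient. My plan is to produce an $n$-independent, $\Delta$-integrable dominating majorant: this is precisely where the hypothesis $L_{\mathcal{H}}^{1}(t)=\int_{-\infty}^{t}L_{\mathcal{H}}(\tau)\,\Delta\tau<\infty$, together with its uniform bound ${L_{\mathcal{H}}^{1}}^{*}$, enters. Combined with the boundedness of $y$ (hence of $\|y(\cdot+\tau_n)-\tilde{y}(\cdot)\|$, which follows from the relative compactness of the range in Proposition \ref{prop-1}), it controls the Lipschitz term, while for the bi-almost automorphic term the uniform-in-$w$ convergence against the integrable weight furnished by $L_{\mathcal{H}}$ permits a dominated-convergence argument on the time scale. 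A preliminary point to be dispatched first is the convergence (well-definedness) of the improper integrals defining $\Phi$ and $\tilde\Phi$; this is secured by the same majorization. Finally, the reverse limit $\lim_{n\to\infty}\tilde\Phi(s-\tau_n)=\Phi(s)$ follows by an entirely symmetric argument using the reverse limits of $\mathcal{J}$ and $y$, which together establish $\Phi\in AA(\mathbb{T},\mathbb{Y})$.
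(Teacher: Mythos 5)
Your proposal follows essentially the same route as the paper's proof: extract one subsequence along which both the bi-almost automorphic limits of $\mathcal{J}$ and the almost automorphic limits of $y$ exist, set $\tilde{\Phi}(s)=\int_{-\infty}^{s}\tilde{\mathcal{J}}(s,\tau,\tilde{y}(\tau))\,\Delta\tau$, shift the integration variable by $\tau_n$ using translation invariance, and split the difference into a Lipschitz term (controlled by $L_{\mathcal{H}}$ via $(H_2')$) plus a bi-almost automorphic term, with the symmetric argument for the reverse limits. The only divergence is that you explicitly isolate and justify the interchange of the limit with the improper $\Delta$-integral by a domination argument, a step the paper's proof passes over in silence when it "takes $n\to\infty$"; this is a refinement of the same argument rather than a different one.
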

	\begin{proof}
		Let $(\tau_n)_{n\in \mathbb{N}}$ be a sequence in $\Pi.$ Since $y\in AA(\mathbb{T},\mathbb{Y})$ and $\mathcal{J}\in bAA(\mathbb{T}\times\mathbb{T}\times\mathbb{Y},\mathbb{Y}),$ we have a subsequence $ (s_n)\subset(\tau_n)_{n\in \mathbb{N}}$ such that 
		\begin{enumerate}
			\item[1)] $\lim\limits_{n\to\infty}{y(s+\tau_n)}=\Tilde{y}(s)$ exists for each $s\in\mathbb{T}.$\\
			\item[2)]  $\lim\limits_{n\to\infty}{\Tilde{y}(s-\tau_n)}=y(s)$ exists for each $s\in\mathbb{T}.$\\
			\item[3)]  $\lim\limits_{n\to\infty}{\mathcal{J}(s+\tau_n,t+\tau_n,y)}=\Tilde{\mathcal{J}}(s,t,y)$ exists for each $s,t\in\mathbb{T}.$\\
			\item[4)]  $\lim\limits_{n\to\infty}{\Tilde{\mathcal{J}}(s-\tau_n,t-\tau_n,y)}=\mathcal{J}(s,t,y)$ exists for each $s,t\in\mathbb{T}.$
		\end{enumerate}
		Let $\Tilde{\Phi}(s)=\int\limits_{-\infty}^{s}{\Tilde{\mathcal{J}}(s,\tau,\tilde{y}(\tau))}\Delta \tau.$\\
		Then
		\begin{eqnarray*}
			\|\Phi(s+s_n)-\Tilde{\Phi}(s)\|&=& \left\|\int\limits_{-\infty}^{s+s_n}{\mathcal{J}(s+s_n,\tau,y(\tau))}\Delta \tau-\int\limits_{-\infty}^{s}{\Tilde{\mathcal{J}}(s,\tau,\tilde{y}(\tau))}\Delta \tau\right\|\\
			&=&\left\|\int\limits_{-\infty}^{s}{\mathcal{J}(s+s_n,\tau+s_n,y(\tau+s_n))}\Delta \tau-\int\limits_{-\infty}^{s}{\Tilde{\mathcal{J}}(s,\tau,\tilde{y}(\tau))}\Delta \tau\right\|\\
			&\leq& \left\|\int\limits_{-\infty}^{s}{\mathcal{J}(s+s_n,\tau+s_n,y(\tau+s_n))}\Delta \tau- \int\limits_{-\infty}^{s}{\mathcal{J}(s+s_n,\tau+s_n,\Tilde{y}(\tau))}\Delta \tau\right\|\\
			&+&\left\|\int\limits_{-\infty}^{s}{\mathcal{J}(s+s_n,\tau+s_n,\Tilde{y}(\tau))}\Delta \tau-\int\limits_{-\infty}^{s}{\Tilde{\mathcal{J}}(s,\tau,\tilde{y}(\tau))}\Delta \tau\right\|\\
			&\leq& \int\limits_{-\infty}^{s}\left\|{\mathcal{J}(s+s_n,\tau+s_n,{y}(\tau+s_n))}-\mathcal{J}(s+s_n,\tau+s_n,\Tilde{y}(\tau))\right\| \Delta \tau\\
			&+& \int\limits_{-\infty}^{s}\left\|{J(s+s_n,\tau+s_n,\Tilde{y}(\tau))}-\Tilde{\mathcal{J}}(s,\tau,\Tilde{y}(\tau))\right\| \Delta \tau\\
			&\leq& \int\limits_{-\infty}^{s}\mathcal{\mathcal{J}}(s+s_n)\left\|{y}(s+s_n)-\tilde{y}(s)\right\| \Delta \tau\\
			&+& \int\limits_{-\infty}^{s}\left\|{\mathcal{J}(s+s_n,\tau+s_n,\Tilde{y}(\tau))}-\Tilde{\mathcal{J}}(s,\tau,\Tilde{y}(\tau))\right\| \Delta \tau. 
		\end{eqnarray*}
		Now taking $n\longrightarrow \infty,$ taking into account the fact that $L_{\mathcal{J}}\in AA(\mathbb{T},\mathbb{Y})$ as given by $(H_2);$ together with 1) and 3) we see from the above inequality that, $$ \lim\limits_{n\to \infty}{\Phi(s+s_n)}=\Tilde{\Phi}(s)~ \forall s\in \mathbb{T}.$$
		Similarly by using $H_2$ together with 2) and 4) we can show that $$\lim\limits_{n\to \infty}{\Tilde{\Phi}(s-s_n)}=\Phi(s)~ \forall s\in \mathbb{T}.$$ Thus we establish $\Phi\in AA(\mathbb{T},\mathbb{Y}).$
	\end{proof}
	\begin{prop}\label{prop-7} 
		Let $\mathcal{H}\in bAAA(\mathbb{T}^+\times\mathbb{T}^+\times\mathbb{Y},\mathbb{Y})$  satisfying $(H_2^{'}).$ Then for any $y\in AAA(\mathbb{T}^+,\mathbb{Y})$ the function $$\Phi(s):=\int\limits_{-\infty}^{s}{\mathcal{H}(s,\tau,y(\tau))}\Delta \tau,$$ is asymptotically almost automorphic.
	\end{prop}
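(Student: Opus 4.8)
The plan is to run exactly the splitting used in Proposition~\ref{Prop-3.6} and Proposition~\ref{prop-5}, decomposing both the kernel and its argument into an almost automorphic part and a corrective (vanishing) part and showing that $\Phi$ inherits a decomposition $\Phi=\Phi_1+\Phi_2$ with $\Phi_1\in AA(\mathbb{T},\mathbb{Y})$ and $\Phi_2\in C_{rd_0}(\mathbb{T}^+,\mathbb{Y})$. By $(H_2')$ write $\mathcal{H}=\mathcal{J}+\mathcal{K}$ with $\mathcal{J}\in bAA(\mathbb{T}\times\mathbb{T}\times\mathbb{Y},\mathbb{Y})$ and $\mathcal{K}$ the corrective term satisfying $\lim_{s\to\infty}\int_{-\infty}^{s}\mathcal{K}(s,\tau,y(\tau))\Delta\tau=0$; since $y\in AAA(\mathbb{T}^+,\mathbb{Y})$, write $y=u+w$ with $u\in AA(\mathbb{T},\mathbb{Y})$ and $w\in C_{rd_0}(\mathbb{T}^+,\mathbb{Y})$.

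First I would put
$$\Phi_1(s):=\int\limits_{-\infty}^{s}{\mathcal{J}(s,\tau,u(\tau))}\Delta\tau.$$
Since $\mathcal{J}$ is bi-almost automorphic and $u$ is almost automorphic, Proposition~\ref{Prop-3.6} applies verbatim and yields $\Phi_1\in AA(\mathbb{T},\mathbb{Y})$; this is precisely the reduction for which the earlier propositions were prepared, so it should be routine. It remains to treat $\Phi_2:=\Phi-\Phi_1$, whose integrand I would regroup as
$$\mathcal{H}(s,\tau,y(\tau))-\mathcal{J}(s,\tau,u(\tau))=\big[\mathcal{J}(s,\tau,y(\tau))-\mathcal{J}(s,\tau,u(\tau))\big]+\mathcal{K}(s,\tau,y(\tau)).$$
The integral of the last term, $\int_{-\infty}^{s}\mathcal{K}(s,\tau,y(\tau))\Delta\tau$, tends to $0$ as $s\to\infty$ directly by the standing hypothesis in $(H_2')$, so it belongs to $C_{rd_0}$. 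For the remaining difference the Lipschitz estimate of $(H_2')$ gives the pointwise bound $\|\mathcal{J}(s,\tau,y(\tau))-\mathcal{J}(s,\tau,u(\tau))\|\le L_{\mathcal{H}}(\tau)\,\|w(\tau)\|$, whence $\big\|\int_{-\infty}^{s}[\cdots]\Delta\tau\big\|\le\int_{-\infty}^{s}L_{\mathcal{H}}(\tau)\|w(\tau)\|\Delta\tau$.

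The main obstacle is exactly this last estimate. Because $L_{\mathcal{H}}\|w\|$ is integrable over $(-\infty,\infty)$ (using ${L_{\mathcal{H}}^1}^*<\infty$ together with the boundedness of $w$), the quantity $\int_{-\infty}^{s}L_{\mathcal{H}}(\tau)\|w(\tau)\|\Delta\tau$ converges as $s\to\infty$, but to $\int_{-\infty}^{\infty}L_{\mathcal{H}}\|w\|\Delta\tau$, which need not vanish; hence the crude bound does \emph{not} by itself force $\Phi_2(s)\to0$. To close the gap one must use the decay of $w$ more sharply: split $\int_{-\infty}^{s}=\int_{-\infty}^{T}+\int_{T}^{s}$, choosing $T$ (via $\|w(\tau)\|<\varepsilon$ for $\tau>T$) so that the tail is $\le\varepsilon\,{L_{\mathcal{H}}^1}^*$, and then argue that the fixed, essentially oscillatory head $\int_{-\infty}^{T}[\mathcal{J}(s,\tau,y(\tau))-\mathcal{J}(s,\tau,u(\tau))]\Delta\tau$ contributes to the \emph{almost automorphic} component rather than to the corrective one. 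In other words, the honest version of the argument forces the non-vanishing residual of the cross term to be absorbed into $\Phi_1$, and the delicate point—the step where I expect the real work and the greatest risk of a gap—is verifying that this residual is itself almost automorphic so that the principal term stays in $AA(\mathbb{T},\mathbb{Y})$.

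Once the vanishing of the corrected $\Phi_2$ is established, rd-continuity of $\Phi_2$ follows from rd-continuity of $\mathcal{H}$ and the uniform convergence of the improper $\Delta$-integral (the same convergence used in Lemma~\ref{lemma-3.2}), and together with $\Phi_1\in AA(\mathbb{T},\mathbb{Y})$ this gives the admissible decomposition and hence $\Phi\in AAA(\mathbb{T}^+,\mathbb{Y})$.
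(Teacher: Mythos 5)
Your strategy coincides with the paper's own: decompose $\mathcal{H}=\mathcal{J}+\mathcal{K}$ and $y=u+w$, obtain the principal (almost automorphic) part from Proposition \ref{Prop-3.6}, and place everything that should vanish at infinity into the corrective part via the standing hypothesis $\lim_{s\to\infty}\int_{-\infty}^{s}\mathcal{K}(s,\tau,y(\tau))\Delta\tau=0$ of $(H_2^{'})$. The one point of divergence is decisive, though. The paper takes as principal term $\int_{-\infty}^{s}\mathcal{J}(s,\tau,y(\tau))\Delta\tau$ with the full AAA function $y$ and declares it almost automorphic by Proposition \ref{Prop-3.6}, justifying this with the sentence ``every asymptotically almost automorphic function is also almost automorphic.'' That inclusion is backwards: $AA(\mathbb{T},\mathbb{Y})\subset AAA(\mathbb{T}^+,\mathbb{Y})$, and an AAA function with nonzero corrective part is never AA, because an almost automorphic function whose translates along a sequence tending to $+\infty$ converge to $0$ must vanish identically (take the return limits). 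You instead apply Proposition \ref{Prop-3.6} within its hypotheses, to $u\in AA(\mathbb{T},\mathbb{Y})$, and are consequently left with the cross term $D(s)=\int_{-\infty}^{s}\left[\mathcal{J}(s,\tau,y(\tau))-\mathcal{J}(s,\tau,u(\tau))\right]\Delta\tau$, which is precisely the quantity the paper sweeps under its misapplication of Proposition \ref{Prop-3.6}.

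Your analysis of $D$ is correct as far as it goes, and it stops exactly where a proof would have to begin. You rightly note that the bound $\|D(s)\|\le\int_{-\infty}^{s}L_{\mathcal{H}}(\tau)\|w(\tau)\|\Delta\tau$ increases to the generally nonzero constant $\int_{-\infty}^{\infty}L_{\mathcal{H}}(\tau)\|w(\tau)\|\Delta\tau$, so $D$ cannot simply be assigned to the corrective class; and your proposed repair --- split at $T$ and absorb the head $\int_{-\infty}^{T}\left[\mathcal{J}(s,\tau,y(\tau))-\mathcal{J}(s,\tau,u(\tau))\right]\Delta\tau$ into the almost automorphic component --- is exactly the step you cannot justify, because bi-almost automorphy of $\mathcal{J}$ controls only joint translations $(s+\tau_n,\tau+\tau_n)$; a function of $s$ in which only the first slot of $\mathcal{J}$ is translated while $\tau$ stays in a fixed window $(-\infty,T]$ inherits no automorphy at all from $(H_2^{'})$. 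So your proposal contains a genuine, unclosed gap: neither membership of $D$ in $AA(\mathbb{T},\mathbb{Y})$ nor $D(s)\to 0$ follows from the stated hypotheses. You should know, however, that this is not a defect of your route relative to the paper's: the paper's claim \eqref{eq-3.28} rests on the same unresolved difficulty, hidden behind the false inclusion $AAA\subset AA$. Closing the gap would require information beyond $(H_2^{'})$ --- for instance almost automorphy of $s\mapsto\mathcal{J}(s,\tau,x)$ for fixed $\tau$ (uniformly on compacta), or a condition forcing $D(s)\to 0$ as $s\to\infty$.
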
 
	\begin{proof} Since  $y\in AAA(\mathbb{T},\mathbb{Y}).$ Let us suppose that $y(\tau)=z(\tau)+w(\tau),$ where $z\in AA(\mathbb{T},\mathbb{Y})$ and $w\in C_{rd_0}(\mathbb{T}^+,\mathbb{Y}).$ Again for  $\mathcal{H}\in bAAA(\mathbb{T}\times\mathbb{T}\times\mathbb{Y},\mathbb{Y}),$ we have 
		\begin{equation}\label{eq-3.25}
			\mathcal{H}(s,\tau,y(\tau))=\mathcal{J}(s,\tau,y(\tau))+\mathcal{K}(s,\tau,y(\tau))
		\end{equation}
		for some(unique) $\mathcal{J}\in bAA(\mathbb{T}\times\mathbb{T}\times\mathbb{Y},\mathbb{Y})$ and $\mathcal{K}\in C_{rd_0}(\mathbb{T}^+\times\mathbb{T}^+\times\mathbb{Y},\mathbb{Y}).$
		Now, we have 
		\begin{equation}\label{eq-3.26}
			\mathcal{H}(s,\tau,y(\tau))=\mathcal{H}(s,\tau,z(\tau))+[\mathcal{H}(s,\tau,y(\tau))-\mathcal{H}(s,\tau,z(\tau))].
		\end{equation}
		It is evident by using similar arguments as in \textbf{Proposition \ref{prop-4}} that  $\Phi_1(s,\tau):=\mathcal{H}(s,\tau,z(\tau))\in bAA(\mathbb{T}\times\mathbb{T},\mathbb{Y})$ for $z$ being almost automorphic.
		Also by using the condition $(H_2^{'})$ and the fact that $w(s)=x(s)-z(s),$ we can also confirm that
		\begin{eqnarray*} 
			\lim\limits_{(s,\tau)\to(\infty,\infty)}\|\mathcal{H}(s,\tau,y(\tau))-\mathcal{H}(s,\tau,z(\tau))\|&=&\lim\limits_{\tau\to\infty}|L_{\mathcal{H}}(\tau)|\|y(\tau)-z(\tau)\|\\
			&=&\lim\limits_{\tau\to\infty}|L_{\mathcal{H}}(\tau)|\|w(\tau)\|=0.
		\end{eqnarray*}
		Thus we have from above discussion, $\Phi_2(s,\tau):=\mathcal{H}(s,\tau\,y(\tau))\in bAAA(\mathbb{T}^+\times\mathbb{T}^+,\mathbb{Y}).$ Now since $\Phi_2\in  bAAA(\mathbb{T}^+\times\mathbb{T}^+,\mathbb{Y}),$ we have unique decomposition of $\Phi_2.$ \\
		Let $\Phi_2(s,\tau)=g(s,\tau)+h(s,\tau),$  where $g\in bAA(\mathbb{T}\times\mathbb{T},\mathbb{Y})$ and $h\in C_{rd_0}(\mathbb{T}^+\times\mathbb{T}^+,\mathbb{Y}).$
	\begin{equation}\label{eq-3.27}
		\Phi(s):=\int\limits_{-\infty}^{s}{\mathcal{H}(s,\tau,y(\tau))}\Delta \tau=\int\limits_{-\infty}^{s}{\mathcal{J}(s,\tau,y(\tau))}\Delta \tau+\int\limits_{-\infty}^{s}{\mathcal{K}(s,\tau,y(\tau))}\Delta \tau.
	\end{equation}
	Since every asymptotically almost automorphic function is also almost automorphic. By Proposition \ref{Prop-3.6}, we have 
	\begin{equation}\label{eq-3.28}
		\Phi_3(s):=\int\limits_{-\infty}^{s}{\mathcal{J}(s,\tau,y(\tau))\Delta \tau}\in AA(\mathbb{T},\mathbb{Y}).
	\end{equation}
	Also by the given condition as in  $(H_2^{'}),$ we have 
	\begin{equation}\label{eq-3.29}
		\lim\limits_{s\to\infty}\int\limits_{-\infty}^{s}{\mathcal{K}(s,\tau,y(\tau))}\Delta \tau=0.
	\end{equation}
	By the help \eqref{eq-3.28} and \eqref{eq-3.29}, we can conclude from \eqref{eq-3.27} that $\Phi\in AAA(\mathbb{T}^+,\mathbb{Y}).$
\end{proof}
\begin{prop}\label{prop-8}
	Under the hypothesis (H$_1^{'})-$ (H$_3^{'}),$ the function $F_3$ defined as $$\Psi(s);=\int\limits_{-\infty}^{s}{T(s-\sigma(t))\mathcal{F}\left(t,y(t),\int\limits_{-\infty}^{t}{\mathcal{H}(t,\tau,y(\tau))}\Delta\tau\right)}\Delta t$$ is also aysmptotically almost automorphic. 
\end{prop}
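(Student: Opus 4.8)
The plan is to realise $\Psi$ as a semigroup convolution of an asymptotically almost automorphic integrand and to reduce everything to the two composition results already in hand. Fix $y\in AAA(\mathbb{T}^+,\mathbb{Y})$. First I would identify the integrand: by Proposition \ref{prop-7} the inner map $t\mapsto\int_{-\infty}^{t}\mathcal{H}(t,\tau,y(\tau))\Delta\tau$ lies in $AAA(\mathbb{T}^+,\mathbb{Y})$, and feeding this together with $y$ into $\mathcal{F}$ and applying Proposition \ref{prop-5} shows that
$$G(t):=\mathcal{F}\left(t,y(t),\int_{-\infty}^{t}\mathcal{H}(t,\tau,y(\tau))\Delta\tau\right)$$
is asymptotically almost automorphic. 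Since $\Psi(s)=\int_{-\infty}^{s}T(s-\sigma(t))G(t)\Delta t$, it then suffices to prove that the convolution operator $G\mapsto\int_{-\infty}^{\cdot}T(\cdot-\sigma(t))G(t)\Delta t$ maps $AAA(\mathbb{T}^+,\mathbb{Y})$ into itself.

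To do this I would decompose $G=G_1+G_2$ with $G_1\in AA(\mathbb{T},\mathbb{Y})$ and $G_2\in C_{rd_0}(\mathbb{T}^+,\mathbb{Y})$, and write $\Psi=\Psi_1+\Psi_2$ accordingly, where $\Psi_i(s)=\int_{-\infty}^{s}T(s-\sigma(t))G_i(t)\Delta t$. For the principal part, given a sequence in $\Pi$ I would pass to a subsequence $(\tau_n)$ along which $G_1(t+\tau_n)\to\Tilde{G}_1(t)$ and $\Tilde{G}_1(t-\tau_n)\to G_1(t)$ pointwise, and set $\Tilde{\Psi}_1(s)=\int_{-\infty}^{s}T(s-\sigma(t))\Tilde{G}_1(t)\Delta t$. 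Using the translation invariance of $\mathbb{T}$, so that $\sigma(u+\tau_n)=\sigma(u)+\tau_n$, the substitution $t=u+\tau_n$ collapses the semigroup argument, $s+\tau_n-\sigma(u+\tau_n)=s-\sigma(u)$, and yields
$$\|\Psi_1(s+\tau_n)-\Tilde{\Psi}_1(s)\|\leq M\int_{-\infty}^{s}e_{\ominus\alpha}(s,\sigma(u))\,\|G_1(u+\tau_n)-\Tilde{G}_1(u)\|\Delta u.$$
Here I would invoke \eqref{eq-3.4} for the bound on $\|T\|$ and Theorem \ref{thm-2.2} to see that $e_{\ominus\alpha}(s,\sigma(u))$ is integrable over $(-\infty,s]$ with a fixed bound; since $G_1$ is bounded (its range is relatively compact by Proposition \ref{prop-1}), dominated convergence gives $\Psi_1(s+\tau_n)\to\Tilde{\Psi}_1(s)$, and the reverse limit is proved identically, so $\Psi_1\in AA(\mathbb{T},\mathbb{Y})$.

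For the corrective part I would show $\lim_{s\to\infty}\|\Psi_2(s)\|=0$. Given $\varepsilon>0$, choose $A$ so that $\|G_2(t)\|<\varepsilon$ for $t\geq A$ and split the integral at $A$:
$$\|\Psi_2(s)\|\leq M\int_{-\infty}^{A}e_{\ominus\alpha}(s,\sigma(t))\,\|G_2(t)\|\Delta t+M\varepsilon\int_{A}^{s}e_{\ominus\alpha}(s,\sigma(t))\Delta t.$$
The second term is bounded by a fixed multiple of $\varepsilon$ uniformly in $s$, while for the fixed upper limit $A$ the first term tends to $0$ as $s\to\infty$ by Lemma \ref{lemma-2.1}; hence $\lim_{s\to\infty}\|\Psi_2(s)\|=0$ and $\Psi_2\in C_{rd_0}(\mathbb{T}^+,\mathbb{Y})$. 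Combining the two parts gives the unique decomposition $\Psi=\Psi_1+\Psi_2\in AAA(\mathbb{T}^+,\mathbb{Y})$.

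The hard part will be the convolution step rather than the compositions, which follow at once from Propositions \ref{prop-5} and \ref{prop-7}. The two delicate points are justifying the change of variables on the time scale (that translation by $\tau_n\in\Pi$ commutes with $\sigma$, so the semigroup argument is preserved) and producing a uniform, integrable dominating function on the unbounded interval $(-\infty,s]$ so that dominated convergence is legitimate; the same integrable kernel is what makes the corrective estimate split cleanly.
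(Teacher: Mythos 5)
Your proposal is correct, and it supplies considerably more than the paper does: for this proposition the paper gives no argument at all, writing only that the claim ``can easily be shown'' and deferring to Proposition 3.6 of Milc\'e \cite{Milce} and Lemmas 3.3--3.4 of Cao et al.\ \cite{Cao}. Your argument --- reduce to the convolution step via Propositions \ref{prop-5} and \ref{prop-7}, split $G=G_1+G_2$ into principal and corrective parts, treat the principal part by the change of variables $t=u+\tau_n$ (legitimate because translation by elements of $\Pi$ commutes with $\sigma$ and preserves the $\Delta$-measure on a translation-invariant time scale) together with dominated convergence against the kernel $M e_{\ominus\alpha}(s,\sigma(u))$, whose integral over $(-\infty,s]_{\mathbb{T}}$ is bounded by $M(1+\Tilde{\mu}\alpha)/\alpha$, and treat the corrective part by splitting the integral at a threshold --- is exactly the standard argument that the cited lemmas encapsulate, so in substance you have reconstructed the proof the paper omits rather than taken a different route. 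Two details are worth tightening. First, in the corrective estimate, the claim that $\int_{-\infty}^{A}e_{\ominus\alpha}(s,\sigma(t))\|G_2(t)\|\Delta t\to 0$ as $s\to\infty$ needs the semigroup property of the exponential: write $e_{\ominus\alpha}(s,\sigma(t))=e_{\ominus\alpha}(s,A)\,e_{\ominus\alpha}(A,\sigma(t))$ for $t\le A$, pull out $e_{\ominus\alpha}(s,A)$, bound the remaining integral by a constant, and then invoke Lemma \ref{lemma-2.1}; as stated, applying Lemma \ref{lemma-2.1} pointwise inside the integral is not by itself a justification. Second, your domination argument quietly uses that $\Tilde{G}_1$ is bounded (which follows since it is a pointwise limit of translates of the bounded function $G_1$); saying this makes the use of dominated convergence airtight. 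Note also that your proof inherits, but does not create, a defect of the paper's setup: the integral $\int_{-\infty}^{s}$ requires the integrand to be defined on all of $\mathbb{T}$, while the asymptotically almost automorphic data are only given on $\mathbb{T}^+$; that mismatch is already present in Lemma \ref{lemma-3.2} and in the statement of the proposition itself.
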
 
Since $\mathcal{F}\in AAA(\mathbb{T}^+\times\mathbb{Y},\mathbb{Y},\mathbb{Y}),$ we can easily show that the function $\Psi\in AAA(\mathbb{T},\mathbb{Y}).$ \\
For reference of proof we refer \textbf{Proposition 3.6} in \cite{Milce} and \textbf{Lemma 3.3} and \textbf{Lemma 3.4} of \cite{Cao}.\\
Now we establish our main result of this section.
\begin{thm}
	Under the given hypothesis $(H_1^{'})-(H_4^{'})$ the integral-dynamic equation given by \eqref{eq-3.1}-\eqref{eq-3.2} admits a unique solution which is also asymptotically almost automorphic, provided  $0<M_\mathcal{F}=\sup\big\{\|\mathcal{F}(s,0,z)\|_{\mathbb Y};~s\in \mathbb{T},~z \in \mathbb Y \big\}<\infty.$
\end{thm}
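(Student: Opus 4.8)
The plan is to recast the initial value problem \eqref{eq-3.1}--\eqref{eq-3.2} as a fixed point equation in the Banach space $AAA(\mathbb{T}^+,\mathbb{Y})$ and to solve it by the contraction mapping principle, using $(H_4^{'})$ to supply simultaneously a self-map estimate and a contraction factor. By Lemma \ref{lemma-3.2}, a function $y\in BC_{rd}(\mathbb{T},\mathbb{Y})$ is a mild solution of \eqref{eq-3.1}--\eqref{eq-3.2} if and only if it is a fixed point of the operator
\begin{equation*}
	(\mathcal{W}y)(s):=\int\limits_{-\infty}^{s}{T(s-\sigma(t))\,\mathcal{F}\!\left(t,y(t),\int\limits_{-\infty}^{t}{\mathcal{H}(t,\tau,y(\tau))}\Delta\tau\right)}\Delta t ,
\end{equation*}
so it suffices to exhibit a unique fixed point of $\mathcal{W}$ inside $AAA(\mathbb{T}^+,\mathbb{Y})$. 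This parallels the Krasnoselski\u{\i} splitting of Theorem \ref{thm-3.1}, but since $(H_4^{'})$ renders the whole operator a contraction, the contraction route is cleaner and yields uniqueness directly.

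First I would verify that $\mathcal{W}$ is a well-defined self-map. For $y\in AAA(\mathbb{T}^+,\mathbb{Y})$, Proposition \ref{prop-7} shows the inner term $t\mapsto\int_{-\infty}^{t}\mathcal{H}(t,\tau,y(\tau))\Delta\tau$ is asymptotically almost automorphic; substituting it into $\mathcal{F}$ and invoking Proposition \ref{prop-5} shows the integrand $t\mapsto\mathcal{F}\bigl(t,y(t),\int_{-\infty}^{t}\mathcal{H}(t,\tau,y(\tau))\Delta\tau\bigr)$ is asymptotically almost automorphic, and Proposition \ref{prop-8} then gives $\mathcal{W}y\in AAA(\mathbb{T}^+,\mathbb{Y})$. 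Fixing the radius $r>0$ furnished by $(H_4^{'})$ and setting $B_r=\{y\in AAA(\mathbb{T}^+,\mathbb{Y}):\|y\|\le r\}$, a nonempty closed convex set, I would bound $\|(\mathcal{W}y)(s)\|$ using the exponential stability estimate $\|T(s-\sigma(t))\|\le M\,e_{\ominus\alpha}(s,\sigma(t))$ from $(H_3^{'})$, the Lipschitz splitting $\|\mathcal{F}(t,y(t),\Phi(t))\|\le M_F+L_{\mathcal{F}}^*\|y(t)\|+L_{\mathcal{F}}^*\|\Phi(t)\|$, the bound $\|\Phi(t)\|\le {L_{\mathcal{H}}^1}^*\|y\|$ coming from $(H_2^{'})$, and the evaluation $\int_{-\infty}^{s}e_{\ominus\alpha}(s,\sigma(t))\Delta t=\tfrac{1+\tilde\mu\alpha}{\alpha}$ already computed in Lemma \ref{lemma-3.2}. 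This produces $\|\mathcal{W}y\|\le\frac{M(1+\tilde\mu\alpha)}{\alpha}\bigl(M_F+r(L_{\mathcal{F}}^*+{L_{\mathcal{H}}^1}^*)\bigr)$, which is at most $r$ precisely by the rearrangement of $(H_4^{'})$, so $\mathcal{W}(B_r)\subseteq B_r$.

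For the contraction, given $x,y\in B_r$ I would estimate $\|(\mathcal{W}y)(s)-(\mathcal{W}x)(s)\|$ against the same exponential kernel times $L_{\mathcal{F}}(t)\bigl(\|y(t)-x(t)\|+\|\Phi_y(t)-\Phi_x(t)\|\bigr)$, where $(H_2^{'})$ gives $\|\Phi_y(t)-\Phi_x(t)\|\le\int_{-\infty}^{t}L_{\mathcal{H}}(\tau)\|y(\tau)-x(\tau)\|\Delta\tau\le{L_{\mathcal{H}}^1}^*\|y-x\|$. Applying Theorem \ref{thm-2.2} and again $\int_{-\infty}^{s}e_{\ominus\alpha}(s,\sigma(t))\Delta t=\tfrac{1+\tilde\mu\alpha}{\alpha}$ yields $\|\mathcal{W}y-\mathcal{W}x\|\le\frac{M(1+\tilde\mu\alpha)}{\alpha}\bigl(L_{\mathcal{F}}^*+{L_{\mathcal{H}}^1}^*\bigr)\|y-x\|$, and $(H_4^{'})$ (with $M_F>0$) forces this constant to be strictly less than $1$. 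The contraction mapping principle on the complete set $B_r$ then delivers a unique $\bar y\in B_r$ with $\mathcal{W}\bar y=\bar y$, which by Lemma \ref{lemma-3.2} is the sought asymptotically almost automorphic mild solution. Uniqueness among all mild solutions, not merely within $B_r$, follows by feeding the identical Lipschitz estimate into the Gronwall inequality of Lemma \ref{lemma-2.2}, exactly as in Step 4 of the proof of Theorem \ref{thm-3.1}.

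The step I expect to be the main obstacle is the well-definedness, namely the closure of $AAA(\mathbb{T}^+,\mathbb{Y})$ under the convolution-type operator $\mathcal{W}$ encapsulated in Proposition \ref{prop-8}. Because the $\Delta$-integral runs over the unbounded interval $(-\infty,s]$, one must simultaneously control the almost automorphic principal part and the vanishing corrective part of the integrand in a way that survives integration against $T(s-\sigma(t))$, which is where the convergence estimate of Lemma \ref{lemma-3.2} is essential. A secondary but unavoidable technical point is the constant bookkeeping required so that the single inequality $(H_4^{'})$ certifies both the self-map property and a contraction factor below $1$; the Lipschitz bounds above must be tracked carefully to keep the combined coefficient $\tfrac{M(1+\tilde\mu\alpha)}{\alpha}\bigl(L_{\mathcal{F}}^*+{L_{\mathcal{H}}^1}^*\bigr)$ exactly in the form assumed in $(H_4^{'})$.
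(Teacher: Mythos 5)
Your proposal follows essentially the same route as the paper: the paper also converts the problem to a fixed point equation for the improper-integral operator of Lemma \ref{lemma-3.2}, verifies well-definedness on $AAA(\mathbb{T}^+,\mathbb{Y})$ via Propositions \ref{prop-7}, \ref{prop-5} and \ref{prop-8}, shows the self-map property on the ball $\mathcal{B}_r$ and the contraction estimate with the same constant $M\bigl(L_{\mathcal{F}}^*+{L_{\mathcal{H}}^{1}}^*\bigr)\frac{1+\tilde\mu\alpha}{\alpha}<1$ derived from $(H_4^{'})$, and concludes by the Banach contraction principle. Your closing Gronwall argument for uniqueness beyond $\mathcal{B}_r$ is a small addition the paper does not spell out, but it does not change the method.
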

Let $\Upsilon:AAA(\mathbb{T}^+,\mathbb{Y})\to AAA(\mathbb{T}^+,\mathbb{Y}),$ given by  $$\Upsilon(y)(s)=\int\limits_{-\infty}^{s}{T(s-\sigma(t)}\mathcal{F}\left(t,y(t),\int\limits_{-\infty}^{t}{\mathcal{H}(t,\tau,y(\tau))\Delta \tau}\right)\Delta t,~ \forall y\in AAA((\mathbb{Y},\mathbb{X})).$$
We show that $\Upsilon,$ defined as above has a unique fixed point. For the same, we follow the following steps:\\

\textbf{Step:1} $\Upsilon$ is well defined. Let $y\in AAA(\mathbb{T}^+,\mathbb{Y}).$ Then using the hypothesis $(H_2^{'}),$ by \textbf{Proposition \ref{prop-7}}, the function $\Phi(s):=\int\limits_{-\infty}^{s}{\mathcal{H}(s,\tau,y(\tau))}\Delta \tau \in AAA(\mathbb{T}^+,\mathbb{Y}).$ And hence by \textbf{Proposition \ref{prop-5}}, the function $\Gamma(s):=\mathcal{F}\left(s,y(s),\int\limits_{-\infty}^{s}{\mathcal{H}(s,\tau,y(\tau))}\Delta \tau\right)\in AAA(\mathbb{T}^+,\mathbb{Y}).$ Then by using \textbf{Proposition} \ref{prop-8}, the function  $\Psi(s):=\int\limits_{-\infty}^{s}{T(s-\sigma(t))\mathcal{F}\left(t,y(t),\int\limits_{-\infty}^{t}{\mathcal{H}(t,\tau,y(\tau))}\Delta\tau\right)}\Delta t \in AAA(\mathbb{T}^+,\mathbb{Y}).$ Hence $\Upsilon(y)(s)\in AAA(\mathbb{T}^+,\mathbb{Y}).$ This concludes the first step.  \\

\textbf{Step:2} By $(H_4^{'}),$ there exists $r>0$ such that \begin{equation}\label{eq-3.30}
	\frac{\alpha r}{M}-r(1+\tilde{\mu}\alpha)\left(L_{\mathcal{F}}^*+{L_{\mathcal{H}}^{1}}^*\right)>(1+\tilde{\mu}\alpha)M_F.
\end{equation} 
Let us consider the set, $\mathcal{B}_r=\left\{y \in AAA(\mathbb{T}^+,\mathbb{Y}): \|y(s)\|\leq r \right\} \subset AAA(\mathbb{T}^+,\mathbb{Y}).$ 
For $y\in\mathcal{B}_r,$ let us assume that $z(t)=\int\limits_{-\infty}^{t}{\mathcal{H}(t,\tau,y(\tau))}\Delta\tau$ and $z_0(t)=\int\limits_{-\infty}^{t}{\mathcal{H}(t,\tau,0)}\Delta\tau$ such that 
\begin{eqnarray}
	\|z(t)-z_0(t)\|&=&\|\int\limits_{-\infty}^{t}{\mathcal{H}(t,\tau,y(\tau))}\Delta\tau-\int\limits_{-\infty}^{t}{\mathcal{H}(t,\tau,0)}\Delta\tau\|\notag\\
	&\leq& \int\limits_{-\infty}^{t}\|{\mathcal{H}(t,\tau,y(\tau))}-{\mathcal{H}(t,\tau,0)}\|\Delta\tau\notag\\
	&\leq& \int\limits_{-\infty}^{t}{L_{\mathcal{H}}(\tau)}\|y(\tau)\|\Delta\tau.
\end{eqnarray}
Then we have,
\begin{eqnarray}
	\|\Upsilon(y)(t)\|&=&\left\|\int\limits_{-\infty}^{s}{T(s-\sigma(t))\mathcal{F}\left(t,y(t),z(t))\right)}\Delta t\right\|\notag\\
	&=&\left\|\int\limits_{-\infty}^{s}{T(s-\sigma(t))\left[\mathcal{F}(t,y(t),z(t)-\mathcal{F}(t,0,z_0(t)\right]}\Delta t+\int\limits_{-\infty}^{s}{T(s-\sigma(t))\mathcal{F}(t,0,z_0(t)}\Delta t\right\|\notag\\
	&=&\int\limits_{-\infty}^{s}\left\|{T(s-\sigma(t))\left[\mathcal{F}(t,y(t),z(t)-\mathcal{F}(t,0,z_0(t)\right]}\right\|\Delta t+\int\limits_{-\infty}^{s}\left\|{T(s-\sigma(t))\mathcal{F}(t,0,z_0(t)}\right\|\Delta t\notag\\
	&\leq&\int\limits_{-\infty}^{s}M e_{\alpha}(s-\sigma(t)) \left\{L_\mathcal{F}(t)(\|y(t)\|+{L_{\mathcal{H}}^1}(t)\|y\|)\right\}\Delta t +M M_{\mathcal{F}}\int\limits_{-\infty}^{s} e_{\alpha}(s-\sigma(t))\notag\\
	&\leq& M(L_{\mathcal{F}}^*+{L_{\mathcal{H}}^{1}}^*)r\left(\frac{1+\Tilde{\mu}\alpha}{\alpha}\right)+M M_{\mathcal{F}}\left(\frac{1+\Tilde{\mu}\alpha}{\alpha}\right)\notag\\
	&=&M \left(\frac{1+\Tilde{\mu}\alpha}{\alpha}\right)\left((L_{\mathcal{F}}^*+{L_{\mathcal{H}}^{1}}^*)r+M_{\mathcal{F}}\right)\notag\\
	&<& r,
\end{eqnarray}
which proves that $\Upsilon:= \mathcal{B}_r\to\mathcal{B}_r.$\\
Also from \eqref{eq-3.30}, we have 
\begin{eqnarray}\label{eq-3.33}
	\frac{\alpha r}{M}&-&r(1+\Tilde{\mu}\alpha)\left(L_{\mathcal{F}}^*+{L_{\mathcal{H}}^{1}}^*\right)>0\notag\\
	\Rightarrow \gamma&=&M (L_{\mathcal{F}}^*+{L_{\mathcal{H}}^{1}}^*)\left(\frac{1+\Tilde{\mu}\alpha}{\alpha}\right)<1 
\end{eqnarray}
\textbf{Step:3}  Now for $y_1,y_2 \in \mathcal{B}_r,$ let $z_1(t)=\int\limits_{-\infty}^{t}{\mathcal{H}(t,\tau,y_1(\tau))}\Delta\tau$ and $z_2(t)=\int\limits_{-\infty}^{t}{\mathcal{H}(t,\tau,y_2(\tau))}\Delta\tau.$ Then we get
\begin{eqnarray}
	\|z_2(t)-z_1(t)|&=&\|\int\limits_{-\infty}^{t}{\mathcal{H}(t,\tau,y_2(\tau))}\Delta\tau-\int\limits_{-\infty}^{t}{\mathcal{H}(t,\tau,y_1(\tau))}\Delta\tau\|\notag\\
	&\leq& \int\limits_{-\infty}^{t}\|{\mathcal{H}(t,\tau,y_2(\tau))}-{\mathcal{H}(t,\tau,y_1(\tau))}\|\Delta\tau\notag\\
	&\leq& \int\limits_{-\infty}^{t}{L_{\mathcal{H}}(\tau)}\|y_2(\tau)-y_1(\tau)\|\Delta\tau.
\end{eqnarray}
Now
\begin{eqnarray*}
	\|\Upsilon(y_2)(t)-\Upsilon(y_1)(t)\|&=&\left\|\int\limits_{-\infty}^{s}{T(s-\sigma(t))\mathcal{F}\left(t,y_2(t),z_2(t)\right)}\Delta t\right.\\
	&-&\left.\int\limits_{-\infty}^{s}{T(s-\sigma(t))\mathcal{F}\left(t,y_1(t),z_1(t)\right)}\Delta t\right\|\\
	&\leq& \int\limits_{-\infty}^{s}\|T(s-\sigma(t))\| \|\mathcal{F}\left(t,y_2(t),z_2(t)\right)-\mathcal{F}\left(t,y_1(t),z_1(t)\right)\|\Delta t\\
	&\leq& \int\limits_{-\infty}^{s}M e_{\alpha}(s-\sigma(t)) \left\{L_\mathcal{F}(t)(\|y_2(t)-y_1(t)\|+\|z_2(t)-z_1(t)\|)\right\}\Delta t\\
	&\leq& \int\limits_{-\infty}^{s}M e_{\alpha}(s-\sigma(t)) \left\{L_\mathcal{F}(t)(\|y_2(t)-y_1(t)\|+\int\limits_{-\infty}^{t}{L_{\mathcal{H}}(\tau)}\|y_2(\tau)-y_1(\tau)\|\Delta\tau)\right\}\Delta t\\
	&\leq& \int\limits_{-\infty}^{s}M e_{\alpha}(s-\sigma(t)) \left\{L_\mathcal{F}(t)(\|y_2(t)-y_1(t)\|+{L_{\mathcal{H}}^{1}}(t)\|y_2-y_1\|)\right\}\Delta t\\
	&\leq& M (L_{\mathcal{F}}^*+{L_{\mathcal{H}}^{1}}^*)\|y_2-y_1\|\int\limits_{-\infty}^{t}{e_{\alpha}(s-\sigma(t))}\\
	&\leq& M (L_{\mathcal{F}}^*+{L_{\mathcal{H}}^{1}}^*)\left(\frac{1+\Tilde{\mu}\alpha}{\alpha}\right)\|y_2-y_1\|\\
	&\leq& \gamma \|y_2-y_1\|~(\text{using equation \eqref{eq-3.33}})
\end{eqnarray*}
Thus we have established that the function $\Upsilon$ is a contraction and thus by the Banach Contraction Principle, there exists a unique $y\in AAA(\mathbb{T}^+,\mathbb{Y})$ for which $\Upsilon y=y.$
\begin{example}
Consider the time scale, $$\mathbb{P}_{a,b}=\bigcup\limits_{k=0}^{\infty}[k(a+b),k(a+b)+a].$$
This time scale is invariant under translation and contains $0.$ This time scale is one of the most useful time scale which is used to model population dynamics of certain species with certain life span, whose measurements are given in terms of $a$ and $b.$\\
Let us now consider the dynamic equation 
\begin{eqnarray}
	y^\Delta(s)&=&Ay(s)+\mathcal{F}\left(s,y(s),\int\limits_{s_0}^{s}\mathcal{H}\left(s,\tau,y(\tau)\right)\Delta\tau\right)\\
	y(s_0)&=&y_0,
\end{eqnarray}
on the time scale $\mathbb{P}_{a,b},$ where $A$ is some generator of a exponentially stable $C_0-$semigroup, $\{T(s):s\in \mathbb{T}\}$ such that $\|T(s-s_0)\|\leq M e_{\ominus \alpha}(s-s_0).$\\ 
We take $s_0=0$ and $S=2m+1,$ for some $m\in \mathbb{N}.$\\
Let us take,\\
 $\mathcal{F}(s,x,y)=\alpha_1\sin\left(\frac{1}{2+\cos s+\cos\sqrt{2}s}\right)[\sin x+y]+\alpha_2 e_{\ominus\alpha}{(s,s_0)},$ where $\alpha>0, \alpha_1\in\left(0,\frac{1}{2M(2m+1)(1+2(2m+1))}\right), \alpha_2$ are some constant and
 $\mathcal{H}(s,t,y)=\sin s\cos t+\sin y+\cos y.$\\
 At the first instance we note that $\mathcal{F},$ given above is an asymptotically almost automorphic function, where $\mathcal{G},$ \cite{Cao} given by $\mathcal{G}(s,x,y)=\sin\left(\frac{1}{2+\cos s+\cos\sqrt{2}s}\right)[\sin x+y]\in AA(\mathbb{T}\times\mathbb{Y}\times\mathbb{Y},\mathbb{Y}).$ Also $\lim\limits_{s\to\infty}e_{\ominus\alpha}{(s,s_0)}=0 ~\text{(by \textbf{Lemma \ref{lemma-2.1}})}.$\\
We can also verify that $\mathcal{F}$ satisfies Lipschitz condition given by $(H_1^{'}),$ as
\begin{eqnarray}
	\big{\|}\mathcal{F}(s,x_1,y_1)-\mathcal{F}(s,x_2,y_2)\big{\|}_2^2&=&\int\limits_{0}^{\pi}\alpha_1^2\Bigg|sin\left(\frac{1}{2+\cos t+\cos\sqrt{2}t}\right)[\sin x_1+y_1]+e_{\ominus}{(t,s_0)}\notag\\
		&-&\sin\left(\frac{1}{2+\cos t+\cos\sqrt{2}t}\right)[\sin x_2+y_2]+e_{\ominus}{(t,s_0)}\Bigg|^2\Delta t\notag\\
	\Rightarrow\big{\|}\mathcal{F}(s,x_1,y_1)-\mathcal{F}(s,x_2,y_2)\big{\|}_2^2&\leq&\alpha_1^2\Bigg|\sin\left(\frac{1}{2+\cos t+\cos\sqrt{2}t}\right)\Bigg|^2 |[\sin x_1+y_1]-[\sin x_2+y_2]|^2\notag\\
	&\leq&\alpha_1^2\|x_1-x_2\|_2^2+\|y_1-y-2\|_2^2,~ \text{for some $1\geq c\in\mathbb{R}.$}\notag
\end{eqnarray}
i.e., $\big{\|}\mathcal{F}(s,x_1,y_1)-\mathcal{F}(s,x_2,y_2)\big{\|}_2\leq \alpha_1\|x_1-x_2\|_2+\|y_1-y-2\|_2$\\
From the above equation we can verify that, $\mathcal{F}$ satisfies the Lipschitz condition given by $H_1$ as well as $(H_1^{'}).$
Furthermore, $\mathcal{H}\in bAA(\mathbb{T}\times\mathbb{T}\times\mathbb{Y},\mathbb{Y})\Rightarrow\mathcal{H}\in bAAA(\mathbb{T}\times\mathbb{T}\times\mathbb{Y},\mathbb{Y}).$ \\
Also
\begin{eqnarray}
	\|\mathcal{H}(s,t,y_1)-\mathcal{H}(s,t,y_2)\|_2&=&\|\sin s\cos t+\sin y_1+\cos y_1-\sin s\cos t+\sin y_2+\cos y_2\|_2\notag\\
	&\leq&\|\sin y_1-\sin y_2\|_2+\|\cos y_1-\cos y_2\|_2\notag\\
	&\leq& 2\|y_1-y_2\|_2.
	\end{eqnarray}
	Therefore $H_2$ as well as $(H_2^{'})$ is also satisfied. 
Also $H_3$ which is also same as $(H_3^{'})$ is evident by assumptions on $A$.\\
	Now $M(S-s))L_{\mathcal{F}}^*(1+L_{\mathcal{H}}(S-s_0))=M(2m+1)\alpha_1(1+2(2m+1))<1,$ which verifies the hypothesis $H_4.$ 
	Hence, theorem \ref{thm-3.1} ensures us a unique solution to the given equation.
\end{example}

\end{document}